\newcommand{\COMMENT}[1]{}
\newcommand{\APPENDIX}[1]{}
\newcommand{\NOAPPENDIX}[1]{#1}
\renewcommand{\APPENDIX}[1]{#1}\renewcommand{\NOAPPENDIX}[1]{}% comment out for the appendix-free version
\newcommand{\TASK}[1]{}
\renewcommand{\TASK}[1]{\footnote{\textcolor{red}{#1}}} % comment out to hide comments
\title{Hitting times in the binomial random graph}
\author[B.~Granet]{Bertille Granet}
\address{Institut f\"ur Informatik, INF 205, Universit\"at Heidelberg, 69120 Heidelberg, Germany}
\email{granet@informatik.uni-heidelberg.de}
\author[F.~Joos]{Felix Joos}
\email{joos@informatik.uni-heidelberg.de}
\author[J.~Schrodt]{Jonathan Schrodt}
\email{schrodt@informatik.uni-heidelberg.de}
\date{}
\thanks{Bertille Granet was supported by the Alexander von Humboldt Foundation.
Felix Joos and Jonathan Schrodt were partially supported by the Deutsche Forschungsgemeinschaft (DFG, German Research Foundation) -- 428212407.}
\newtheorem{theorem}{Theorem}[section]
\newtheorem{prop}[theorem]{Proposition}
\newtheorem{lemma}[theorem]{Lemma}
\newtheorem{lm}[theorem]{Lemma}
\newtheorem{cor}[theorem]{Corollary}
\theoremstyle{definition}
\newtheorem{claim}{Claim}
\newenvironment{proofclaim}[1][Proof of Claim]{\begin{proof}[#1]}{\end{proof}}
\newcounter{claimnumber}
\newtheoremstyle{stepstyle}{10pt}{5pt}{\em}{0pt}{\em}{:}{5pt}{}
\theoremstyle{stepstyle}
\numberwithin{equation}{section}
\Crefname{enumi}{}{}
\Crefname{thm}{Theorem}{Theorems}
\Crefname{lm}{Lemma}{Lemmas}
\Crefname{cor}{Corollary}{Corollaries}
\Crefname{prop}{Proposition}{Propositions}
\Crefname{claim}{Claim}{Claims}
\Crefname{equation}{}{}
\Crefname{conjecture}{Conjecture}{Conjectures}
\Crefname{figure}{Figure}{Figures}
\Crefname{fact}{Fact}{Facts}
\definecolor{darkblue}{rgb}{0,0,0.5}
\def\op{\operatorname}
\newcommand{\pr}{\mathbb{P}}
\newcommand{\ex}{\mathbb{E}}
\newcommand{\bE}{\mathbb{E}}
\newcommand{\bP}{\mathbb{P}}
\newcommand{\bN}{\mathbb{N}}
\newcommand{\bR}{\mathbb{R}}
\newcommand{\cG}{\mathcal{G}}
\newcommand{\cO}{\mathcal{O}}
\newcommand{\cW}{\mathcal{W}}
\DeclareMathOperator{\Var}{Var}
\newcommand{\tr}{tr}
\newcommand{\1}{\mathbbm 1}
\begin{document}

\begin{abstract}
Fix $k\geq 2$, choose $\frac{\log n}{n^{(k-1)/k}}\leq p\leq 1-\Omega(\frac{\log^4 n}{n})$, and consider $G\sim G(n,p)$. For any pair of vertices $v,w\in V(G)$, we give a simple and precise formula for the expected number of steps that a random walk on $G$ starting at~$w$ needs to first arrive at~$v$. The formula only depends on basic structural properties of $G$.
This improves and extends recent results of Ottolini and Steinerberger, as well as Ottolini, who considered this problem for constant as well as for mildly vanishing $p$.
\end{abstract}

\maketitle
\section{Introduction}

Random walks on graphs have many applications in mathematics and physics, and are also in themselves an important subject of study in graph theory. See for example \cite{lovasz1993} for a survey covering various areas of research on random walks. Of particular interest is the following question: how long does it take for a random walk to hit a certain vertex? Hitting times were calculated for specific classes of (deterministic) graphs (see e.g.\ \cite{rao2013finding,palacios1994expected,palacios1990bounds,palacios2009hitting}), while more general results were obtained by Lov\'{a}sz~\cite{lovasz1993}, who gave an explicit formula for the hitting time in terms of the spectral properties of the host graph.

Another prolific avenue of research in graph theory is to solve general problems for the binomial random graph $G(n,p)$, that is, the graph on $n$ vertices where each edge is present independently with probability $p$. Löwe and Torres \cite{lowe2014hitting} used the above mentioned result of Lov\'asz to compute the expected time a random walk starting at a random vertex needs to hit a fixed vertex $v$.

More precisely, for a simple random walk~$X=X_0, X_1, \dots $ on a graph~$G$ starting at~$w\in V(G)$, 
denote by~$T_{wv}$ the \emph{hitting time} of $v$, that is, the number of steps the random walk needs to hit~$v$ for the first time. 
Denote by~$H_{wv}$ the expected hitting time of $v$, that is, $H_{wv}=\ex T_{wv}$.
Löwe and Torres \cite{lowe2014hitting} proved that there exists $\cO(1)\leq\xi\coloneqq\xi(n)\leq\cO(\log\log n)$ such that if $p=\omega(\frac{\log^\xi n}{n})$, then with high probability the random graph $G\sim G(n,p)$ satisfies $\sum_{w\in V(G)} \frac{2|E(G)|}{d(v)}H_{wv} = (1+o(1))n$ for any $v\in V(G)$.

However, this only gives a (weighted) average hitting time for a fixed $v$ across all possible starting points $w$, while we are interested in an explicit formula for $H_{wv}$ for any given $w,v$.
For $p\geq\frac{\log n}{\sqrt{n}}$ and $\ell\geq 3$, 
it is straightforward to compute that 
\[\pr[X_\ell = v \mid X_0 = w] = \frac{1}{n} \pm \cO\left(\frac{\sqrt{\log n}}{pn^{3/2}}\right)\]
(see \cref{lemma: random walk mixing time}). This implies that $H_{wv}\approx n$ for any $w\neq v\in V(G)$.%
\COMMENT{Let~$C$ such that $\pr[X_3=v\mid X_0=w] = \frac{1}{n} \pm \frac{C\sqrt{\log n}}{pn^{3/2}}$ for any $w\in V(G)$. Let $q\coloneqq \frac{1}{n}-\frac{C\sqrt{\log n}}{pn^{3/2}}$. For each $j\in\bN$, let $Y_j=1$ if $\pr[X_{3j}=v\mid X_0=w]$ and $Y_j=0$ otherwise. Then $Y_1,Y_2,\ldots$ stochastically dominates a sequence $Z_1,Z_2,\ldots$ of $\op{Ber}(q)$-distributed $\{0,1\}$-random variables. As the expected time~$i$ where $Z_i=1$ for the first time is $\frac{1}{q}$, the statement follows.}
For constant~$p$ this has recently been improved by Ottolini and Steinerberger~\cite{ottolini2023concentration}, who gave an explicit formula for the hitting time up to a small error term which tends to $0$ as $n$ tends to infinity.

More precisely, fix $p\in(0,1)$ and let $G\sim G(n,p)$. 
It is well known that, with high probability, $G$~has diameter~2.
Let $v\in V(G)$ and let $H_{N(v)}\coloneqq\frac{1}{|N(v)|}\sum_{u\in N(v)}H_{uv}$ be the average expected hitting time in~$N(v)$. 
As observed in \cite{ottolini2023concentration}, it is not difficult to show that $H_{N(v)}=\frac{2|E(G)|}{d(v)}-1$ (see Lemma~\ref{lemma: H_nuv}).
Ottolini and Steinerberger \cite{ottolini2023concentration} then proved that, for any $w\in N(v)$, the value $H_{wv}$ is essentially equal to the average hitting time in~$N(v)$.
That is, it is irrelevant at which vertex $w\in N(v)$ the random walk starts, the expected time to hit $v$ is every time essentially the same.
If $\op{dist}(v,w)=2$, one needs to account for the average time a random walk needs to pass to~$N(v)$. As the probability of moving from a vertex of distance~2 to a vertex of distance~1 is about~$p$, the expected time to pass to~$N(v)$ is roughly~$\frac{1}{p}$.

\begin{theorem}[Ottolini and Steinerberger~\cite{ottolini2023concentration}]\label{thm: ottolini steinerberger}
    Let $p\in(0,1)$ and $G\sim G(n,p)$.
    Let $v\neq w\in V(G)$. Then, with high probability,
    \begin{equation*}
        H_{wv} = \frac{2|E(G)|}{d(v)} - 1 + \frac{1}{p}\1_{\op{dist}(v,w)=2} \pm \cO\left(\frac{\log^{3/2} n}{\sqrt{n}}\right).
    \end{equation*}
\end{theorem}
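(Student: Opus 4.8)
The plan is to express $H_{wv}$ through the Green's function of the walk and then evaluate it explicitly, exploiting that $G(n,p)$ is almost regular. Write $m\coloneqq|E(G)|$, let $\pi_v\coloneqq d(v)/(2m)$ be the stationary distribution, and set $p_t(x,y)\coloneqq\pr[X_t=y\mid X_0=x]$. With high probability $G$ is connected and contains a triangle, so the walk is ergodic, and the classical hitting-time identity for reversible chains (see, e.g., \cite{lovasz1993}) gives
\[
H_{wv}=\frac{1}{\pi_v}\sum_{t=0}^{\infty}\bigl(p_t(v,v)-p_t(w,v)\bigr)=\frac{2m}{d(v)}\sum_{t=0}^{\infty}\bigl(p_t(v,v)-p_t(w,v)\bigr).
\]
(One may also derive this from first-step analysis together with \cref{lemma: H_nuv}.) I would split the sum into the initial terms $t\in\{0,1,2\}$, which produce the main contribution, and the tail $t\geq 3$, which I will show is negligible.

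For the tail, I would use the spectral gap of $G(n,p)$: with high probability the second-largest absolute eigenvalue $\lambda_*$ of the transition matrix satisfies $\lambda_*=\cO\bigl(\sqrt{(\log n)/(np)}\bigr)$ (this is what underlies \cref{lemma: random walk mixing time}, and for constant $p$ it is a standard eigenvalue-concentration estimate for random graphs). The reversibility bound $|p_t(x,y)-\pi_y|\leq\lambda_*^{\,t}\sqrt{\pi_y/\pi_x}$ together with $\pi_x,\pi_y=\Theta(1/n)$ then gives $|p_t(v,v)-p_t(w,v)|=\cO(\lambda_*^{\,t})$, so $\sum_{t\geq 3}|p_t(v,v)-p_t(w,v)|=\cO(\lambda_*^{\,3})=\cO\bigl((\log n)^{3/2}(np)^{-3/2}\bigr)$. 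Multiplying by $2m/d(v)=\cO(n)$ shows the tail contributes $\cO\bigl((\log n)^{3/2}p^{-3/2}n^{-1/2}\bigr)$ to $H_{wv}$, which for constant $p$ is exactly of the order of the claimed error term. (Since the mixing time is $\cO(1)$ here, one could instead estimate the first handful of tail terms via \cref{lemma: random walk mixing time} and bound the remaining ones geometrically, but the spectral route is cleaner.)

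It remains to evaluate the three leading terms. As $w\neq v$ we have $p_0(v,v)-p_0(w,v)=1$, while $p_1(v,v)=0$, $\ p_1(w,v)=\frac{1}{d(w)}\1_{w\sim v}$, and
\[
p_2(v,v)=\frac{1}{d(v)}\sum_{u\in N(v)}\frac{1}{d(u)},\qquad p_2(w,v)=\frac{1}{d(w)}\sum_{u\in N(v)\cap N(w)}\frac{1}{d(u)}.
\]
Here I would substitute the concentration estimates that hold, with high probability, simultaneously over all vertices and pairs (Chernoff bounds plus a union bound): $d(u)=(1\pm\cO(\sqrt{(\log n)/n}))\,np$ for every $u$, $|N(v)\cap N(w)|=(1\pm\cO(\sqrt{(\log n)/n}))\,np^2$ for all $v\neq w$, and $m=(1\pm\cO(\sqrt{(\log n)/n}))\binom{n}{2}p$; I would also use that $G$ has diameter $2$ whp, so $N(v)\cap N(w)\neq\emptyset$ whenever $w\not\sim v$. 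This yields $\frac{2m}{d(v)}p_1(w,v)=\bigl(\tfrac1p\pm\cO(\sqrt{(\log n)/n})\bigr)\1_{w\sim v}$, $\ \frac{2m}{d(v)}p_2(v,v)=\tfrac1p\pm\cO(\sqrt{(\log n)/n})$, and $\frac{2m}{d(v)}p_2(w,v)=1\pm\cO(\sqrt{(\log n)/n})$. Adding everything up: if $w\sim v$, the $p_1$ and $p_2(v,v)$ contributions cancel up to the error, so $H_{wv}=\frac{2m}{d(v)}-1\pm\cO(\sqrt{(\log n)/n})$; if $\op{dist}(v,w)=2$ the $p_1$ term is absent and $H_{wv}=\frac{2m}{d(v)}-1+\frac1p\pm\cO(\sqrt{(\log n)/n})$. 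Since $\sqrt{(\log n)/n}=\cO(\log^{3/2}n/\sqrt{n})$, this is exactly the claimed formula.

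The only non-elementary input is the spectral-gap bound used to control the tail; everything else is routine concentration. Accordingly, the step I expect to require the most care is the bookkeeping rather than any single hard estimate: tracking that the relative error $\cO(\sqrt{(\log n)/n})$ in each degree, codegree and in $m$ stays of that order and hence contributes only $\cO(\sqrt{(\log n)/n})$ to the $\Theta(1)$-sized correction terms (while the $\Theta(n)$-sized term $2m/d(v)$ is left unevaluated), and arranging that all estimates hold simultaneously over the $\cO(n^2)$ pairs $(v,w)$. The one conceptual point to get right is that the $1/p$ contributions of the $t=1$ and $t=2$ diagonal terms cancel exactly when $w\sim v$, which is precisely what makes the indicator $\1_{\op{dist}(v,w)=2}$ appear.
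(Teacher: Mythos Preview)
Your proof is correct, and it takes a genuinely different route from the paper. You use the classical fundamental--matrix identity $H_{wv}=\pi_v^{-1}\sum_{t\ge 0}(p_t(v,v)-p_t(w,v))$ and split off $t\in\{0,1,2\}$ from a geometrically decaying tail. The paper instead never writes down this identity; it compares $H_{wv}$ to the average $H_{N(v)}=\frac{1}{|N(v)|}\sum_{u\in N(v)}H_{uv}$, which equals $\frac{2|E|}{d(v)}-1$ exactly (\cref{lemma: H_nuv}), and then shows via a truncated first--step expansion (\cref{lemma: formula for Hitting times}, \cref{prop: mixed hitting times}, \cref{thm: main version with probabilities}) that $H_{wv}-H_{N(v)}=\frac{2|E|}{d(v)}\cdot\frac{1}{|N(v)|}\sum_{u\in N(v)}\sum_{i=1}^{3k+1}(\Pr_u[X_i=v]-\Pr_w[X_i=v])$ up to small error. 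These two approaches are of course related---your diagonal term $p_{t+1}(v,v)$ equals $\frac{1}{|N(v)|}\sum_{u\in N(v)}p_t(u,v)$, which is exactly the averaged quantity the paper uses---but the bookkeeping is organised differently.

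What each buys: your approach is shorter and more transparent for the constant-$p$, diameter-$2$ setting of \cref{thm: ottolini steinerberger}; the main input is just the spectral gap of $D^{-1/2}AD^{-1/2}$, which for constant $p$ is a classical $\cO(1/\sqrt{n})$ bound, and the three leading terms drop out cleanly. The paper's approach, on the other hand, is designed to scale: by anchoring everything to $H_{N(v)}$ (known exactly) and controlling $H_{\mu_{\ell,w}v}-H_{\mu_{\ell,u}v}$ via mixing in $\ell_1$ (\cref{lemma: H_muv-Hmu'v}), it avoids needing a pointwise $|p_t(x,y)-\pi_y|$ bound and carries through uniformly for $p\ge n^{-(k-1)/k}\log n$ and diameter $k$, yielding the walk-count formula of \cref{thm: main}. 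One small caveat in your write-up: the bound $\lambda_*=\cO(\sqrt{(\log n)/(np)})$ you quote is for the $\ell_2$ contraction on mean-zero vectors (as in the paper's appendix), not literally the second eigenvalue of $D^{-1}A$; for constant $p$ this distinction is harmless and the pointwise reversibility bound you use is standard, but it is worth stating precisely which spectral quantity you invoke.
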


In a follow-up article,  Ottolini \cite{ottolini2024blockmodel} derived a similar result for any~$p$ with $\frac{\log^5n}{p^8n}\to0$, extending~\cref{thm: ottolini steinerberger} to the case where~$p$ vanishes mildly.
Observe that in this case, $G$ has diameter~$2$ as in Theorem~\ref{thm: ottolini steinerberger}.

To prove Theorem~\ref{thm: ottolini steinerberger}, Ottolini and Steinerberger~\cite{ottolini2023concentration} rely on the fact that $|N(v)\cap N(w)|\approx p^2n$ for any $v,w\in V(G)$. 
For small enough~$p$, this is not necessarily true. However, as long as~$p\geq n^{-1+o(1)}$, it is still possible to find an easy and precise formula for~$H_{wv}$. This is our main result.

For any graph $G$, vertices $v,w\in V(G)$, and $i\in \bN$, let $\cW_i(w)$ be the number of walks of length~$i$ starting at~$w$ and let~$\cW_i(w,v)$ be the number of $w$-$v$ walks of length~$i$.

\begin{theorem}\label{thm: main}
    Let $k\geq 2$, let $\frac{\log n}{n^{(k-1)/k}}\leq p\leq 1-\Omega(\frac{\log^4n}{n})$,
    and $G\sim G(n,p)$. Let $v\neq w\in V(G)$. Then, with high probability,%
    \begin{align*}
        H_{wv} &= \frac{2|E(G)|}{d(v)} - 1 + \frac{2|E(G)|}{d(v)} \frac{1}{|N(v)|} \sum_{u\in N(v)}\sum_{i=1}^{3k+1}\left(\frac{\cW_i(u,v)}{\cW_i(u)}-\frac{\cW_i(w,v)}{\cW_i(w)}\right)
        \pm\cO\left(\frac{\sqrt{\log n}}{p^{3/2}\sqrt{n}}\right).
    \end{align*}
\end{theorem}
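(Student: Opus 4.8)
The natural route is via the classical spectral expansion of hitting times (cf.\ \cite{lovasz1993}). With high probability $G\sim G(n,p)$ is connected (as $p\ge\frac{\log n}{n^{(k-1)/k}}\gg\frac{\log n}{n}$) and non‑bipartite (it contains a triangle), so the walk is irreducible and aperiodic, and for every $x\ne v$ one has
\[
H_{xv}=\frac{2|E(G)|}{d(v)}\sum_{t\ge 0}\bigl(p_t(v,v)-p_t(x,v)\bigr),\qquad p_t(x,y)\coloneqq\pr[X_t=y\mid X_0=x].
\]
Averaging this over $x\in N(v)$ (the $t=0$ term of the resulting difference vanishes), using $H_{N(v)}=\frac{2|E(G)|}{d(v)}-1$ from Lemma~\ref{lemma: H_nuv}, and recalling $\frac{2|E(G)|}{d(v)}=\Theta(n)$, the theorem reduces to showing
\begin{align*}
&\sum_{t\ge 1}\Bigl(\tfrac1{|N(v)|}\sum_{u\in N(v)}p_t(u,v)-p_t(w,v)\Bigr)\\
&\qquad\qquad=\sum_{i=1}^{3k+1}\Bigl(\tfrac1{|N(v)|}\sum_{u\in N(v)}\tfrac{\cW_i(u,v)}{\cW_i(u)}-\tfrac{\cW_i(w,v)}{\cW_i(w)}\Bigr)\pm\cO\!\left(\tfrac{\sqrt{\log n}}{p^{3/2}n^{3/2}}\right).
\end{align*}

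\textbf{Truncating the tail.} First I would use the standard fact that with high probability the second‑largest‑in‑absolute‑value eigenvalue $\lambda_\ast$ of the transition matrix of $G$ is $\cO(1/\sqrt{pn})$; this follows from $\|A(G)-p(J-I)\|_{\mathrm{op}}=\cO(\sqrt{pn})$ (with $J$ the all‑ones and $I$ the identity matrix) together with the degree concentration $d(y)=(1+\cO(\varepsilon))\,\overline d$ for all $y$, where $\overline d\coloneqq 2|E(G)|/n$ and $\varepsilon\coloneqq\sqrt{(\log n)/(pn)}$; here the hypothesis $p\ge\frac{\log n}{n^{(k-1)/k}}$ gives $pn\ge n^{1/k}\log n$, well above the threshold such estimates require. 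Near‑regularity then yields $\bigl|p_t(x,v)-\tfrac{d(v)}{2|E(G)|}\bigr|=\cO(\lambda_\ast^{\,t})$ uniformly in $x$, so the part of the first sum with $t>3k+1$ has absolute value $\cO(\lambda_\ast^{\,3k+2})=\cO\bigl((pn)^{-(3k+2)/2}\bigr)$, which one checks is $o\!\left(\frac{\sqrt{\log n}}{p^{3/2}n^{3/2}}\right)$. It then remains to compare the two finite sums term by term.

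\textbf{From transition probabilities to walk ratios.} Fix $1\le t\le 3k+1$. Writing $d(y)=\overline d(1+\delta_y)$ (so $\max_y|\delta_y|=\cO(\varepsilon)$) and $p_t(x,v)=\sum_{x=x_0,\dots,x_t=v}\prod_{j=0}^{t-1}d(x_j)^{-1}$, I would expand $\prod_{j}(1+\delta_{x_j})^{-1}=1-\sum_{j=0}^{t-1}\delta_{x_j}+\cO(\varepsilon^2)$ in this walk sum and, after summing over the endpoint via $\sum_y p_t(x,y)=1$, in the analogous expansion of $\cW_t(x)/\overline d^{\,t}$. The zeroth‑order parts reproduce exactly $\cW_t(x,v)/\cW_t(x)$; the quadratic remainders contribute only $\cO(\varepsilon^2 p_t(x,v))$ per term, which is negligible after multiplying by $\Theta(n)$ and summing the $\cO(1)$ values of $t$; and the first‑order parts produce the entries $\overline d^{-t}\bigl(A^j\mathrm{diag}(\delta)A^{t-j}\bigr)_{xv}$, $0\le j<t$. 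The key point is that the mean‑field part of these entries is proportional to $\mathbf 1^\top\mathrm{diag}(\delta)\mathbf 1=\sum_y\delta_y=0$ (since $\overline d$ is the average degree), so that only spectral fluctuations of the short‑walk counts survive, and these are controlled by $\|A(G)-p(J-I)\|_{\mathrm{op}}=\cO(\sqrt{pn})$; moreover the $\delta_x$‑dependent pieces (the $j=0$ terms) cancel to second order between the numerator and the denominator. Carrying this out for the full difference $\tfrac1{|N(v)|}\sum_{u\in N(v)}(\,\cdot\,)-(\,\cdot\,)$ — where both the averaging over $N(v)$ and the subtraction provide extra cancellation — should give a termwise error of $\cO\!\left(\frac{\sqrt{\log n}}{p^{3/2}n^{3/2}}\right)$, and summing the finitely many terms finishes the proof.

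\textbf{The main obstacle} I expect to be this last step: showing that the degree‑irregularity corrections to $p_t(x,v)\approx\cW_t(x,v)/\cW_t(x)$ are genuinely of the claimed order requires sufficiently sharp concentration of the short‑walk counts $(A^t)_{xy}$ around their mean‑field values, together with a clean accounting of the cancellation coming from $\sum_y\delta_y=0$ and from the difference‑and‑average structure. Doing this uniformly over all $t\le 3k+1$ and all $u\in N(v)$, and especially in the sparse regime (where $p$ is so small that $N(u)\cap N(v)$ is typically empty, so that several of the ratios $\cW_i(u,v)/\cW_i(u)$ vanish and the walks relevant to the correction term appear only at lengths close to the diameter), is where the bulk of the technical work lies.
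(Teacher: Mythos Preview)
Your route is correct and genuinely different from the paper's. The paper does \emph{not} invoke the spectral hitting-time identity $H_{xv}=\pi(v)^{-1}\sum_{t\ge 0}(p_t(v,v)-p_t(x,v))$. Instead, it establishes an exact ``shift-by-$\ell$'' formula
\[
H_{wv}=\ell+H_{\mu_{\ell,w}v}-\frac{2|E(G)|}{d(v)}\sum_{i=1}^{\ell-1}\pr_w[X_i=v]\pm\cO\!\left(\frac{1}{pn}\right)
\]
(Lemma~\ref{lemma: formula for Hitting times}), and then controls $|H_{\mu_{\ell,w}v}-H_{\mu_{\ell,u}v}|$ by combining the total-variation mixing bound of Proposition~\ref{prop: mixing property} with a crude a~priori estimate $H_{wv}\lesssim (pn)^k$ (Lemma~\ref{lemma: H_wv lesssim n}). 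Your spectral expansion replaces this whole detour: once you have the uniform pointwise bound $|p_t(x,v)-\pi(v)|=\cO(\lambda_\ast^{\,t})$, the tail truncation is immediate and no separate control of $H_{\mu_\ell v}$ is needed. Conversely, the paper's argument is slightly more self-contained in that it never writes down the hitting-time fundamental-matrix identity, only the return-time identity behind Lemma~\ref{lemma: H_nuv}.

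Your anticipated ``main obstacle'' is not one. The conversion from $p_t(x,v)$ to $\cW_t(x,v)/\cW_t(x)$ does not require the first-order degree-fluctuation cancellations you sketch. Since each degree is $\overline d\,(1+\cO(\varepsilon))$ with $\varepsilon=\sqrt{(\log n)/(pn)}$, one has for fixed $t\le 3k+1$ both
\[
p_t(x,v)=\overline d^{\,-t}\cW_t(x,v)\bigl(1+\cO(\varepsilon)\bigr)
\qquad\text{and}\qquad
\cW_t(x)=\overline d^{\,t}\bigl(1+\cO(\varepsilon)\bigr),
\]
hence $p_t(x,v)-\cW_t(x,v)/\cW_t(x)=\cO\!\bigl(\varepsilon\cdot\overline d^{\,-t}\cW_t(x,v)\bigr)$. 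Combined with $\overline d^{\,-t}\cW_t(x,v)=\cO(1/(pn))$ (every walk of length $t-1$ extends to at most one walk ending at $v$ among $\gtrsim pn$ extensions), this already gives the termwise error $\cO(\varepsilon/(pn))=\cO\bigl(\sqrt{\log n}/(p^{3/2}n^{3/2})\bigr)$, uniformly in $x$ and $t$. The paper packages exactly this as a short induction (Proposition~\ref{prop: probabilities in terms of paths}); no expansion in $A^j\mathrm{diag}(\delta)A^{t-j}$, no use of $\sum_y\delta_y=0$, and no extra cancellation from the averaging/subtraction structure is needed.
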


The missing range of $1-\Omega(\frac{\log^4n}{n})< p\leq 1$ is an artefact of our proof which uses, as a black box, a spectral result which does not cover the cases when $p$ is close to $1$ (see \cref{app:lambda2} for details). 

The error bound can be improved to $\cO\left(\frac{\sqrt{\log n}}{\sqrt{pn}}\right)$ when one considers the probability that the random walk reaches~$v$ after~$i$ steps rather than the proportion of walks of length~$i$ which end at $v$ (see \cref{thm: main version with probabilities}). When $k=2$, the diameter of~$G$ is~2 (with high probability) and these probabilities can be explicitly calculated (up to some small error term). This implies the following result, which in turn implies~\cref{thm: ottolini steinerberger} as well as its corresponding result in~\cite{ottolini2024blockmodel}, and improves on the error term.

\begin{cor}\label{cor: diameter 2}
    Let $\frac{\log n}{\sqrt{n}}\leq p\leq 1-\Omega(\frac{\log^4n}{n})$, and $G\sim G(n,p)$. Let $v\neq w\in V(G)$. Then, with high probability,
    \begin{equation*}
        H_{wv} = \frac{2|E(G)|}{d(v)} - 1 + \frac{1}{p}\1_{\op{dist}(v,w)=2} \pm\cO\left(\frac{\sqrt{\log n}}{p^{3/2}\sqrt{n}}\right).
    \end{equation*}
\end{cor}

\section{Proof Outline}

Let $k\geq 2$ be an integer and let $\frac{\log n}{n^{(k-1)/k}}\leq p\leq 1-\Omega(\frac{\log^4n}{n})$. Let $G\sim G(n,p)$ and note that, throughout our proof, we will only make use of standard properties of the random graphs, as well as well-known spectral properties.
Most of the ideas presented below were already used by Ottolini and Steinerberger \cite{ottolini2023concentration} to prove \cref{thm: ottolini steinerberger}. We discuss the differences and our contributions more precisely at the end of this section.

The basic idea to find a formula for~$H_{wv}$ is to compare~$H_{wv}$ to $H_{N(v)}\coloneqq \frac{1}{|N(v)|}\sum_{u\in N(v)}H_{uv}$, the average hitting time in~$N(v)$. We have
\begin{equation*}
    H_{wv} = H_{N(v)} + \frac{1}{|N(v)|} \sum_{u\in N(v)} (H_{wv}-H_{uv}).
\end{equation*}
Explicitly calculating~$H_{N(v)}$ and the difference of expected hitting times depending on the starting vertex of the random walk yields the desired result.

We give a short outline of how these calculations work.
The distribution of a random walk on~$G$ after~$\ell$ steps converges rapidly to a stationary distribution~$\pi$, where $\pi(v) = \frac{d(v)}{2|E(G)|}$ for each $v\in V(G)$. It is a well-known fact that the mean return time of a random walk to a vertex~$v\in V(G)$ is given by $\frac{1}{\pi(v)} = \frac{2|E(G)|}{d(v)}$.
As the mean return time is $1 + \sum_{u\in N(v)}\pr[X_1=u\mid X_0=v]H_{uv} = 1+\frac{1}{|N(v)|}\sum_{u\in N(v)}H_{uv} = 1+ H_{N(v)}$, this implies that $H_{N(v)}=\frac{2|E(G)|}{d(v)}-1$.

To compare~$H_{wv}$ and~$H_{w'v}$ for $w,w'\in V(G)$, we look at the expected hitting times after a small number of steps.
If the random walk has not yet hit~$v$, it essentially makes no difference where the random walk started. The only case to consider is that the random walk may actually hit~$v$ in the first few steps. By explicitly calculating these differences, we obtain our main result.

As mentioned, although these three core ideas are essentially borrowed from Ottolini and Steinerberger \cite{ottolini2023concentration}, we make non-trivial adjustments to extend the results from constant $p$ to the much wider range corresponding to constant diameter.
Moreover, while the core idea of comparing $H_{wv}$ to average hitting times was used in \cite{ottolini2023concentration}, we implement it differently. In \cite{ottolini2023concentration}, vertices at distance $1$ and $2$ from $v$ were in fact treated separately: $H_{wv}$ was compared to $H_{N(v)}$ when $w\in N(v)$ and to $\frac{1}{n-|N(v)\cup \{v\}|}\sum_{u\notin N(v)\cup \{v\}}H_{uv}$ otherwise. Here, we always compare $H_{wv}$ to $H_{N(v)}$, which not only requires non-trivial adjustments, but in fact leads to a simpler proof and better error terms.

\section{Preliminaries}

\subsection{Notation}

For $\alpha,\beta,\delta\in \mathbb{R}$, we write $\alpha=(1\pm\delta)\beta$ if $(1-\delta)\beta\leq\alpha\leq(1+\delta)\beta$.
All asymptotic notation is understood with respect to~$n$.
For functions $f$ and $g$ we sometimes write $f\lesssim g$ instead of $f=\cO(g)$ and $f\gtrsim g$ instead of $g=\cO(f)$.
We set $\bN\coloneqq\{1,2,\ldots\}$ and $\bN_0\coloneqq\bN\cup\{0\}$, and for any $k\in \mathbb{N}$ we write $[k]\coloneqq \{1,\dots, k\}$.

Let $G$ be a graph. For $v,w\in V(G)$, we denote by $\op{dist}(v,w)$ the length of the shortest path between~$v$ and~$w$ and by $\op{diam}(G)$ the diameter of $G$. If we take a sum over all vertices of a graph~$G$ we often write~$\sum_{v}$ instead of~$\sum_{v\in V(G)}$ and, given $w\in V(G)$, $\sum_{v\neq w}$ instead of $\sum_{v\in V(G)\setminus\{w\}}$.

All random walks on a graph are to be understood as simple random walks. Let~$G$ be a graph and $v,w\in V(G)$. Let $X=X_0,X_1,\ldots$ be a random walk on~$G$ starting at $w$. We denote $\pr_w[\cdot]\coloneqq\pr[\cdot\mid X_0=w]$ and $\ex_w[\cdot]\coloneqq\ex[\cdot\mid X_0=w]$. When $X$ has a unique stationary distribution, we denote it by $\pi$. For $\ell\in \bN_0$, we sometimes write $\mu_{\ell,w}(\cdot)\coloneqq \pr_w[X_\ell=\cdot]$ for the probability distribution of the random walk after $\ell$ steps. If~$w$ is clear from the context, we may omit~$w$ and write simply~$\mu_\ell$. Let~$T_{wv}$ be the time the random walk takes to hit~$v$ for the first time and $H_{wv}\coloneqq\ex T_{wv}$. For a probability distribution~$\delta$ on $V(G)$, we define $H_{\delta v}\coloneqq\sum_{u}\delta(u)H_{uv}$.

All vectors in this paper are row vectors. Given a vector $x=(x_1,\dots, x_k)$, we set $\|x\|\coloneqq \sum_{i\in [k]}|x_i|$. Probability distributions are sometimes viewed as vectors.

Let $G$ be a graph on $n$ vertices. We denote by $A$ its adjacency matrix and by $\lambda_1\geq \dots \geq \lambda_n$ the eigenvalues of $A$. We assume without loss of generality that the eigenvectors associated to $\lambda_1, \dots, \lambda_n$ form an orthonormal basis and denote by $\phi=(\phi_1, \dots, \phi_n)$ the eigenvector corresponding to $\lambda_1$.

The following statement is useful for our calculations. We omit its straightforward proof.

\begin{lemma}\label{lemma: landau}
    Let $f,f',g$, and $g'$ be functions such that $g'= \cO(g)$. Suppose that $f'g=\cO(fg')$ or $fg'=\cO(f'g)$. Then,%
    \COMMENT{We calculate
        \begin{align*}
            \frac{f\pm\cO(f')}{g\pm\cO(g')} - \frac{f}{g}
            &= \frac{fg\pm\cO(f'g) - (fg\pm\cO(fg'))}{g(g\pm\cO(g'))}
            = \frac{\pm\cO(f'g)\pm\cO(fg')}{g^2\pm\cO(gg')}
        \end{align*}
        and the statement follows from $gg'= \cO(g^2)$.
    }
    \begin{equation*}
        \frac{f\pm\cO(f')}{g\pm\cO(g')} =
        \frac{f}{g} \pm \left\{
        \begin{matrix*}[l]
            \cO\left(\frac{fg'}{g^2}\right) & \text{if $f'g= \cO(fg')$}, \\
            \cO\left(\frac{f'}{g}\right) & \text{if $fg'= \cO(f'g)$}.
        \end{matrix*}\right.
    \end{equation*}
\end{lemma}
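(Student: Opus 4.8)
\textbf{Proof plan for Lemma~\ref{lemma: landau}.}

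The plan is to treat the statement as a purely algebraic identity about the error terms hidden in the $\cO$-notation, unwinding the asymptotic notation into genuine inequalities. Write the numerator as $f + \theta_1 f'$ and the denominator as $g + \theta_2 g'$, where $\theta_1, \theta_2$ are functions bounded in absolute value by some constants $C_1, C_2$ coming from the two $\cO$'s in the hypothesis (and we may assume $g$, $g+\theta_2 g'$ are nonzero wherever the quantities are considered, which is implicit in the statement). Since $g' = \cO(g)$, say $|g'| \le C_3 |g|$, for $n$ large enough the factor $g + \theta_2 g'$ is comparable to $g$: precisely $|g + \theta_2 g'| \ge (1 - C_2 C_3)|g| \gtrsim |g|$ once $C_2 C_3 < 1$, which we may arrange by absorbing constants (or by noting the statement is only asserted asymptotically). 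This gives the key denominator bound $|g(g+\theta_2 g')| \gtrsim g^2$.

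Next I would compute the difference exactly:
\begin{equation*}
    \frac{f + \theta_1 f'}{g + \theta_2 g'} - \frac{f}{g}
    = \frac{(f+\theta_1 f')g - f(g + \theta_2 g')}{g(g+\theta_2 g')}
    = \frac{\theta_1 f' g - \theta_2 f g'}{g(g+\theta_2 g')}.
\end{equation*}
The numerator $\theta_1 f' g - \theta_2 f g'$ is, in absolute value, at most $C_1 |f'g| + C_2 |fg'|$, so it is $\cO(|f'g| + |fg'|)$. Now the case hypothesis kicks in: if $f'g = \cO(fg')$ then $|f'g| + |fg'| = \cO(|fg'|)$, so the whole difference is $\cO(fg'/g^2)$ after dividing by the denominator bound $\gtrsim g^2$; symmetrically, if $fg' = \cO(f'g)$ then $|f'g| + |fg'| = \cO(|f'g|)$ and the difference is $\cO(f'g/g^2) = \cO(f'/g)$. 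This yields exactly the two claimed bounds, and rearranging $\frac{f \pm \cO(f')}{g \pm \cO(g')} = \frac{f}{g} \pm (\text{difference})$ completes the proof.

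The only mild subtlety — and the place where one must be slightly careful rather than merely mechanical — is the step where we pass from ``$|g+\theta_2 g'|$ is comparable to $|g|$'' using $g' = \cO(g)$: this requires $n$ to be large enough that the constant from $g' = \cO(g)$ times the constant from $\cO(g')$ in the denominator is strictly less than $1$. Since all asymptotic statements in the paper are understood for large $n$, this is harmless, but it is the one nontrivial inequality; everything else is exact arithmetic and bookkeeping of constants. I would phrase the whole argument in a couple of lines, essentially the display above together with the two one-line case distinctions, which is presumably why the authors chose to omit it.
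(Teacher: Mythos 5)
Your proposal is correct and follows essentially the same route as the paper's own (omitted) proof: cross-multiply to get the exact difference $\frac{\theta_1 f'g-\theta_2 fg'}{g(g+\theta_2 g')}$, bound the numerator by the dominant one of $|f'g|,|fg'|$ according to the case hypothesis, and use $g'=\cO(g)$ to see the denominator is comparable to $g^2$. Your explicit attention to the constant in the denominator lower bound is a fair point that the paper glosses over, but in all of the paper's applications $g'=o(g)$, so it is indeed harmless.
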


\subsection{Properties of $G(n,p)$}

We make use of the following standard concentration inequality.%
\COMMENT{For $0<\delta\leq 1$, \cite[Theorem 2.1]{janson2011random} gives
\[\pr[Y \geq \ex Y+\delta\ex Y]\leq \exp\left(-\frac{\delta^2 (\ex Y)^2}{2(1+\delta/3)\ex Y}\right)\leq \exp\left(-\frac{\delta^2\ex Y}{3}\right)\]
and
\[\pr[Y \leq \ex Y- \delta\ex Y]\leq \exp\left(-\frac{\delta^2 (\ex Y)^2}{2\ex Y}\right)\leq \exp\left(-\frac{\delta^2\ex Y}{3}\right),\]
as desired.
For $\delta>1$, we clearly have $\pr[Y \leq \ex Y- \delta\ex Y]=0$ and, by \cite[Theorem 2.1]{janson2011random}, we have
\[\pr[Y \geq \ex Y+\delta\ex Y]\leq \exp\left(-\ex Y(1+\delta)\log(1+\delta)\right)\leq \exp\left(-\frac{\delta\ex Y}{3}\right).\]}

\begin{lemma}[Chernoff bound (see e.g.\ {\cite[Theorem 2.1]{janson2011random}}]\label{lemma: Chernoff}
    Let $Y$ be a binomial random variable. Then, for any $\delta>0$,
    \begin{equation*}
        \pr[|Y - \ex Y| \geq \delta\ex Y] \leq 2\exp\left(-\frac{\min\{\delta,\delta^2\}}{3}\ex Y\right).
    \end{equation*}
\end{lemma}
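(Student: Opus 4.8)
The plan is to reduce the lemma to the standard multiplicative Chernoff bounds for the binomial and then tidy up the exponents. Write $\mu\coloneqq\ex Y$ and split $\pr[|Y-\mu|\geq\delta\mu]\leq\pr[Y\geq(1+\delta)\mu]+\pr[Y\leq(1-\delta)\mu]$; it suffices to show that each summand is at most $\exp\bigl(-\tfrac{\min\{\delta,\delta^2\}}{3}\mu\bigr)$, and adding the two bounds accounts for the factor~$2$ in the statement.

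For the upper tail I would invoke the Bernstein-type inequality $\pr[Y\geq\mu+t]\leq\exp\bigl(-\tfrac{t^2}{2(\mu+t/3)}\bigr)$, which holds for all $t\geq 0$ and is part of \cite[Theorem 2.1]{janson2011random}. Setting $t=\delta\mu$ turns this into $\pr[Y\geq(1+\delta)\mu]\leq\exp\bigl(-\tfrac{\delta^2\mu}{2(1+\delta/3)}\bigr)$, so the only thing left is the elementary inequality $\tfrac{\delta^2}{2(1+\delta/3)}\geq\tfrac{\min\{\delta,\delta^2\}}{3}$ for all $\delta>0$: for $0<\delta\leq 1$ this reads $\tfrac{1}{2(1+\delta/3)}\geq\tfrac{1}{3}$, i.e.\ $\delta\leq\tfrac{3}{2}$, and for $\delta\geq 1$ it reads $\tfrac{\delta}{2(1+\delta/3)}\geq\tfrac{1}{3}$, i.e.\ $\tfrac{7}{3}\delta\geq 2$ — both true. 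For the lower tail I would argue as follows: if $\delta>1$ then the event $\{Y\leq(1-\delta)\mu\}$ is impossible since $Y\geq 0$; if $\delta=1$ then $\pr[Y\leq 0]=\pr[Y=0]\leq e^{-\mu}\leq e^{-\mu/3}$ (as $Y$ is a sum of independent indicators); and if $0<\delta<1$ the companion bound $\pr[Y\leq\mu-t]\leq\exp\bigl(-\tfrac{t^2}{2\mu}\bigr)$ with $t=\delta\mu$ gives $\exp\bigl(-\tfrac{\delta^2\mu}{2}\bigr)\leq\exp\bigl(-\tfrac{\delta^2\mu}{3}\bigr)$. In every case the bound obtained is $\exp\bigl(-\tfrac{\min\{\delta,\delta^2\}}{3}\mu\bigr)$, which proves the lemma. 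I expect no genuine obstacle here: this is a convenience lemma, merely packaging textbook inequalities into a single $\delta$-uniform form, and the only mechanical care needed is the exponent estimate displayed above; alternatively, one could case-split on $\delta\leq 1$ versus $\delta>1$ from the start and quote the corresponding standard forms of the Chernoff bound directly in each case.
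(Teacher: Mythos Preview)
Your proposal is correct and follows essentially the same route as the paper's own (commented-out) derivation: split into upper and lower tails, apply the Bernstein-type bounds from \cite[Theorem 2.1]{janson2011random}, and do a short case analysis on $\delta\leq 1$ versus $\delta>1$ to absorb the exponent into $\min\{\delta,\delta^2\}/3$. The only cosmetic difference is that the paper case-splits from the outset and, for $\delta>1$, quotes the $\exp(-(1+\delta)\log(1+\delta)\,\mu)$ form for the upper tail rather than continuing with the Bernstein form as you do; your exponent check $\tfrac{\delta}{2(1+\delta/3)}\geq\tfrac{1}{3}$ handles that case just as well.
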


We now list a number of standard properties for a typical $G\sim G(n,p)$.
\begin{lemma}\label{lemma: properties of G(np)}
    Let $k\geq 2$, let $\frac{\log n}{n^{(k-1)/k}}\leq p\leq 1-\Omega(\frac{\log^4n}{n})$, and $G\sim G(n,p)$. Then, with high probability, $G$ satisfies all of the following properties.
    \begin{enumerate}[label=\rm(\roman*)]
        \item $G$ is not bipartite. \label{item: G not bipartite}
        \item $\op{diam}(G) \leq k$. \label{item: diam G}
        \item For all $v\in V(G)$, $d(v) = pn \pm \cO(\sqrt{pn\log n})$. \label{item: N(v)}
        \item $2|E(G)| = pn^2 \pm \cO(\sqrt{pn^2\log n})$. \label{item: E(G)}
        \item For all $v\neq w\in V(G)$, $|N(v)\cap N(w)| = p^2n \pm \cO(\max\{\sqrt{p^2n\log n},\log n\})$. \label{item: N(v) cap N(w)}
        \item For all $i\in [n]$, $\phi_i=\frac{1}{\sqrt{n}}\pm \cO\left(\frac{\log^{3/2} n}{\sqrt{p}n\log (pn)}\right)$.\label{item:phi}
        \item $\lambda_1=(1+o(1))pn$.\label{item:lambda1}
        \item $\max\{|\lambda_2|,|\lambda_n|\}=\cO(\sqrt{pn})$.\label{item:lambda2}
    \end{enumerate}
\end{lemma}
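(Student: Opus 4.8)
The plan is to show that each of the eight properties holds with probability $1-o(1)$ and conclude by a union bound. Properties \ref{item: G not bipartite}--\ref{item: N(v) cap N(w)} are elementary first- and second-moment estimates, while \ref{item:phi}--\ref{item:lambda2} will be deduced from the spectral estimates for the adjacency matrix of $G(n,p)$ collected in \cref{app:lambda2}; it is the hypothesis of those estimates that forces the restriction $p\le 1-\Omega(\log^4 n/n)$. I will repeatedly use the two consequences of $p\ge\log n/n^{(k-1)/k}$ that $pn\ge n^{1/k}\log n\to\infty$ and $n^2p\ge n^{(k+1)/k}\log n\to\infty$.

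For \ref{item: N(v)}, \ref{item: E(G)} and \ref{item: N(v) cap N(w)} I would apply the Chernoff bound \cref{lemma: Chernoff} to the binomial variables $d(v)\sim\op{Bin}(n-1,p)$, $|E(G)|\sim\op{Bin}(\binom n2,p)$, and (for fixed $v\ne w$) $|N(v)\cap N(w)|\sim\op{Bin}(n-2,p^2)$, each time with a deviation $\delta$ chosen so that $\delta^2$ times the mean is a large multiple of $\log n$; a union bound over the at most $n^2$ relevant vertices/pairs then fails with probability $n^{-\Omega(1)}$. This yields \ref{item: N(v)} and \ref{item: E(G)} after absorbing the harmless discrepancies $|(n-1)p-pn|=p$ and $|2\binom n2p-pn^2|=pn$ into the error terms (legitimate since $pn\ge\log n$). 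For \ref{item: N(v) cap N(w)} one must separate cases because $p^2n$ may be sub-logarithmic when $k\ge3$: if $p^2n\ge\log n$, the above with $\delta$ of order $\sqrt{\log n/(p^2n)}\le1$ gives the error term $\cO(\sqrt{p^2n\log n})$, while if $p^2n<\log n$ the crude bound $\pr[\op{Bin}(n-2,p^2)\ge C\log n]\le(ep^2n/(C\log n))^{C\log n}\le n^{-C}$ gives $|N(v)\cap N(w)|=p^2n\pm\cO(\log n)$, which is the second branch of the maximum.

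For \ref{item: G not bipartite}, since $pn\to\infty$ the expected number of triangles is $\binom n3p^3\sim(pn)^3/6\to\infty$, with variance $\cO((pn)^3+n^4p^5)=o((pn)^6)$ (using $n^2p\to\infty$), so by Chebyshev $G$ contains a triangle, hence an odd cycle, with high probability. For \ref{item: diam G}, the hypothesis gives $p^kn^{k-1}\ge\log^k n\to\infty$, which exceeds the classical threshold for $\op{diam}(G(n,p))\le k$ (for $k=2$, property \ref{item: N(v) cap N(w)} already implies this directly); one can reprove the general case by two rounds of edge exposure — the first round shows, by iterating that a ball of size at most $n/2$ grows by a factor $\gtrsim pn$ per step, that whp every radius-$a$ ball has size $\gtrsim(pn)^a$ for $a\le k-1$, and the second round joins two such balls of radii $a,b$ with $a+b=k-1$ using fresh edges, which succeeds with probability $1-n^{-\omega(1)}$ since $p(pn)^{k-1}=p^kn^{k-1}\gtrsim\log^k n$ — followed by a union bound over pairs.

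Finally, \ref{item:lambda1} and \ref{item:lambda2} are quoted from \cref{app:lambda2}. For \ref{item:phi}, write $A=pJ+E$ with $J$ the all-ones matrix; then $\mathbf1/\sqrt n$ is the leading unit eigenvector of $pJ$ (eigenvalue $pn$, all other eigenvalues $0$), and $\|E(\mathbf1/\sqrt n)\|_2=\tfrac1{\sqrt n}\|(d(v)-pn)_v\|_2=\cO(\sqrt{pn\log n})$ by \ref{item: N(v)}, so the Davis--Kahan theorem together with the eigenvalue gap $\lambda_1-\lambda_2=(1+o(1))pn$ from \ref{item:lambda1}--\ref{item:lambda2} gives $\|\phi-\mathbf1/\sqrt n\|_2=\cO(\sqrt{\log n/(pn)})$ (no sign issue since $\phi$ is the Perron vector). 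Upgrading this $\ell_2$-bound to the claimed entrywise bound is the delicate part: one feeds it back into the eigenvalue equation $\phi_i=\lambda_1^{-1}\sum_{j\sim i}\phi_j$, using the near-regularity of $G$ from \ref{item: N(v)} (and, if necessary, iterating) so as to avoid the $p^{-1/2}$ loss that a naive $\ell_2\to\ell_\infty$ passage would incur. This entrywise delocalization of the Perron vector is the main obstacle of the lemma — everything else reduces to standard concentration — and, as noted, the exclusion of $p$ very close to $1$ is inherited from the spectral input of \cref{app:lambda2}.
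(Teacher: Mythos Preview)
Your overall strategy and your treatment of \ref{item: G not bipartite}, \ref{item: N(v)}--\ref{item: N(v) cap N(w)} coincide with the paper's: a union bound over the eight items, the triangle count for \ref{item: G not bipartite}, and Chernoff with the same case split in \ref{item: N(v) cap N(w)}. For \ref{item: diam G} and \ref{item:phi}--\ref{item:lambda2} the paper does not argue at all but simply cites external results (Bollob\'as, Mitra, Krivelevich--Sudakov, and F\"uredi--Koml\'os/Vu respectively), whereas you sketch arguments. One small correction: only \ref{item:lambda2} is handled in \cref{app:lambda2}; \ref{item:lambda1} is quoted from Krivelevich--Sudakov in the paper, though it also follows at once from \ref{item: N(v)} via $2|E(G)|/n\le\lambda_1\le\Delta(G)$.

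The genuine gap is in \ref{item:phi}. Your Davis--Kahan step correctly yields $\|\phi-\mathbf 1/\sqrt n\|_2=\cO(\sqrt{\log n/(pn)})$, but the bootstrapping you outline does not reach the stated $\ell_\infty$ bound when $p\to 0$. From
\[
\lambda_1\bigl(\phi_i-n^{-1/2}\bigr)=\sum_{j\sim i}\bigl(\phi_j-n^{-1/2}\bigr)+(d(i)-\lambda_1)n^{-1/2},
\]
the second term is indeed $\cO(\sqrt{\log n}/(\sqrt p\,n))$ (using $\lambda_1=pn\pm\cO(\sqrt{pn\log n})$, which one gets from \ref{item: N(v)} as above), but bounding the first by Cauchy--Schwarz against the $\ell_2$ estimate only gives $\sqrt{d(i)}\cdot\cO(\sqrt{\log n/(pn)})=\cO(\sqrt{\log n})$, hence $\cO(\sqrt{\log n}/(pn))$ after dividing by $\lambda_1$ --- off from the target by a factor of order $p^{-1/2}$, which diverges at the bottom of the range. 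Iterating the resulting entrywise bound gives no contraction since $d(i)/\lambda_1=1+o(1)$. One needs a finer argument (for instance, working with $A^\ell$ for $\ell$ comparable to the diameter so that its entries are nearly constant), which is essentially Mitra's proof; the paper sidesteps this entirely by citing Mitra. That said, the only place the paper actually uses \ref{item:phi} is via $\sum_i(\phi_i-n^{-1/2})^2$ in \cref{app:mixing}, and for that purpose your $\ell_2$ bound already suffices.
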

\begin{proof}
    First, note that \cite[Theorem 1]{mitra2009entrywise} states that \cref{item:phi} holds with high probability, while \cite[Theorem 1.1]{krivelevich2003largest} states that \cref{item:lambda1} holds with high probability when \cref{item: N(v)} is satisfied.%
        \COMMENT{\cite[Theorem 1.1]{krivelevich2003largest} gives $\lambda_1=(1+o(1))\max\{\sqrt{\Delta(G)},pn\}$. When \cref{item: N(v)} holds, $\sqrt{\Delta(G)}\lesssim \sqrt{pn}$ so $\sqrt{\Delta(G)}\leq pn$.}
    Moreover, \cite[Theorem 1]{furedi1981eigenvalues} states that \cref{item:lambda2} holds with high probability when $p$ is constant. The key part of their argument was shown to work in our range of $p$ by Vu \cite{vu2007spectral}, see \APPENDIX{\cref{app:lambda2}}\NOAPPENDIX{the appendix in the arXiv version of this paper} for more details.
    Therefore, it suffices to show that each of~\cref{item: G not bipartite,item: N(v) cap N(w),item: diam G,item: N(v),item: E(G)} holds with high probability. Applying a union bound yields the desired result.

    Note that~\ref{item: G not bipartite} follows from the well-known fact that~$G$ contains a triangle, while~\ref{item: diam G} follows from~\cite[Corollaries 7 and 8]{bollobas1981diameter}.%
        \COMMENT{For any $G'\subseteq G$, we have $\op{diam}(G')\geq \op{diam}(G)$. So for any $p'\leq p$, we have $\op{diam}(G(n,p'))\geq \op{diam}(G(n,p))$. 
        Let $p'\coloneqq \frac{\log n}{n^{(k-1)/k}}$. By \cite[Corollaries 7 and 8]{bollobas1981diameter}, we have $\op{diam}(G(n,p'))=k$ whp.}

    For~\ref{item: N(v)}, let $v\in V(G)$. Then $\ex d(v) = p(n-1)$
    and so \cref{lemma: Chernoff} gives\COMMENT{Set $\delta\coloneqq\frac{3\sqrt{\log n}}{\sqrt{\ex d(v)}}<1$. Then 
    \begin{align*}
        \pr[|d(v)-pn|\geq 4\sqrt{pn\log n}]
        &\leq \pr[|d(v)-p(n-1)|\geq 3\sqrt{p(n-1)\log n}] \\
        &= \pr[|d(v)-\ex d(v)| \geq \delta \ex d(v)] \leq 2\exp\left(-\frac{\delta^2}{3}\ex d(v)\right) =2n^{-3}.
    \end{align*}}
    \[\pr[|d(v)-pn|\geq 4\sqrt{pn\log n}] \leq 2n^{-3}.\]
    We apply a union bound to see that $d(v)=pn\pm \cO(\sqrt{pn\log n})$ holds for all $v\in V(G)$ with probability at least $1-2n^{-2}$.

    Analogously we see that~\ref{item: E(G)} holds with probability $1-o(1)$.%
        \COMMENT{$2\ex E(G)=p(n-1)n$ so \cref{lemma: Chernoff} with $\delta\coloneqq \frac{2\sqrt{\log n}}{\sqrt{2\ex E(G)}}<1$ implies
        \begin{align*}
            \pr[|2E(G)- pn^2|\geq  3 \sqrt{pn^2\log n}]&\leq \pr[|2E(G)-p(n-1)n|\geq 2\sqrt{p(n-1)n\log n}]\\
            &=\pr[|2E(G)- 2\ex E(G)|\geq \delta 2\ex E(G)]
            \leq 2\exp\left(-\frac{2\delta^2 \ex E(G)}{3}\right)=2n^{-4/3}.
        \end{align*}}

    For~\ref{item: N(v) cap N(w)}, let $v\neq w\in V(G)$. Then $\ex |N(v)\cap N(w)| = p^2(n-2)$.
    If $p^2n\leq 10\log n$, then
    \cref{lemma: Chernoff} yields%
        \COMMENT{Note $\delta\coloneqq \frac{9\log n}{\ex |N(v)\cap N(w)|} \geq 1$, thus~\cref{lemma: Chernoff} yields
        \begin{align*}
            \pr[||N(v)\cap N(w)| - p^2n| \geq 10\log n]\leq \pr[||N(v)\cap N(w)| - p^2(n-2)| \geq 9\log n] \leq 2\exp(-3\log n).
        \end{align*}
        }
    \[\pr[||N(v)\cap N(w)| - p^2n| \geq 10\log n] \leq 2n^{-3}.\]
    If $p^2n > 10\log n$, then \cref{lemma: Chernoff} yields%
        \COMMENT{Note $\delta\coloneqq\frac{3\sqrt{\log n}}{\sqrt{\ex |N(v)\cap N(w)|}}<1$ so
        \begin{align*}
            \pr[||N(v)\cap N(w)| - p^2n| \geq 4\sqrt{p^2n\log n}]&\leq \pr[||N(v)\cap N(w)| - p^2(n-2)| \geq 3\sqrt{p^2(n-2)\log n}]\\
            &=\pr[||N(v)\cap N(w)| - \ex |N(v)\cap N(w)|| \geq \delta \ex |N(v)\cap N(w)|]\\
            &\leq 2\exp(-3 \log n)= 2n^{-3}.
    \end{align*}}
    \[\pr[|N(v)\cap N(w)| - p^2n| \geq 4\sqrt{p^2n\log n}] \leq 2n^{-3}.\]
    We apply a union bound to see that $|N(v)\cap N(w)|=p^2n\pm \cO(\max\{\sqrt{p^2n\log n},\log n\})$ holds for all $v\neq w\in V(G)$ with probability at least $1-2n^{-1}$.
\end{proof}

Given $k,n\in \bN$ with $k\geq 2$, and $\frac{\log n}{n^{(k-1)/k}}\leq p\leq 1-\Omega(\frac{\log^4n}{n})$, we denote by~$\cG_{k,n,p}$ the set of all graphs that satisfy the properties of \cref{lemma: properties of G(np)}. For simplicity, when we write $G\in \cG_{k,n,p}$, we always assume implicitly that $k,n$, and $p$ satisfy the required conditions. In particular, any parameter which is fixed before choosing $G\in \cG_{k,n,p}$ is understood as being independent of $n$. Note that we will fact prove \cref{thm: main,cor: diameter 2} for any (deterministic) graph in~$\cG_{k,n,p}$.

We state the following direct corollary describing key quantities for our proofs later. We omit its straightforward proof.

\begin{cor}\label{cor: properties of G(np)}
    Let $G\in\cG_{k,n,p}$ and $v\in V(G)$. Then, the following hold.%
        \COMMENT{For~\ref{item: 1/N(v)}, note that $|N(v)|=pn\pm\cO(\sqrt{pn\log n})$. By \cref{lemma: landau}, $\frac{1}{pn\pm\cO(\sqrt{pn\log n})} = \frac{1}{pn}\pm\cO(\frac{1\cdot\sqrt{pn\log n})}{p^2n^2})$. For~\ref{item: 2E/deg v}, note that also $2|E(G)|=pn^2\pm\cO(\sqrt{pn^2\log n})$. By \cref{lemma: landau}, $\frac{pn^2\pm\cO(\sqrt{pn^2\log n})}{pn\pm\cO(\sqrt{pn\log n})} = n \pm \cO(\frac{pn^2\sqrt{pn\log n}}{p^2n^2})$. For~\ref{item: deg v/2E}, we use~\ref{item: 2E/deg v}. \Cref{lemma: landau} yields $\frac{1}{n\pm\cO(\frac{\sqrt{n\log n}}{\sqrt{p}})} = \frac{1}{n} \pm \cO(\frac{\sqrt{n\log n}}{\sqrt{p}n^2})$.
        For~\ref{item: stationary distribtution}, note that~$X$ is irreducible, as~$G$ is connected and aperiodic, as~$G$ is not bipartite. It is a well-known fact that an irreducible aperiodic discrete Markov chain has a stationary distribution and the fact that it is given by the equation above can just be checked by doing a simple calculation.
        Finally, \cref{item: Twv} follows from \cref{item: 1/N(v)}. Indeed, this is clearly true for $i=1$. If $i>1$, we have $\pr_w[X_i=v] = \pr[X_i = v \mid X_{i-1}\in N(v)]\pr_w[X_{i-1}\in N(v)] \leq \frac{2}{pn}\cdot 1$.
        }
    \begin{enumerate}[label=\rm(\roman*)]
        \item $\frac{1}{d(v)} = \frac{1}{pn} \pm \cO(\frac{\sqrt{\log n}}{p^{3/2}n^{3/2}})$. \label{item: 1/N(v)}
        \item $\frac{2|E(G)|}{d(v)} = n \pm \cO(\frac{\sqrt{n\log n}}{\sqrt{p}})$. \label{item: 2E/deg v}
        \item $\frac{d(v)}{2|E(G)|} = \frac{1}{n} \pm \cO(\frac{\sqrt{\log n}}{\sqrt{p}n^{3/2}})$. \label{item: deg v/2E}
        \item Any random walk~$X$ on~$G$ is irreducible and aperiodic. In particular, there exists a unique stationary distribution~$\pi$ of~$X$ and $\pi(v)=\frac{d(v)}{2|E(G)|} = \frac{1}{n}\pm\cO(\frac{\sqrt{\log n}}{\sqrt{p}n^{3/2}})$. \label{item: stationary distribtution}
        \item For any $i\in \bN$ and any random random walk $X=X_0, X_1, \dots$ starting at $w\in V(G)$, we have $\bP[T_{wv}=i]\leq \bP[X_i=v]\leq \frac{2}{pn}$.\label{item: Twv}
    \end{enumerate}
\end{cor}

Let $G\in\cG_{k,n,p}$ and $v\in V(G)$. The next lemma explicitly states the average hitting time of all vertices in~$N(v)$. This will be useful to derive an explicit hitting time formula in \cref{lemma: formula for Hitting times}.

\begin{lemma}[{\cite[Lemma~1]{ottolini2023concentration}}]\label{lemma: H_nuv}
    Let $G\in\cG_{k,n,p}$ and $v\in V(G)$. Then,
    \begin{equation*}
        H_{{N(v)}} \coloneqq \frac{1}{|N(v)|}\sum_{u\in N(v)} H_{uv} = \frac{2|E(G)|}{d(v)} - 1.
    \end{equation*}
\end{lemma}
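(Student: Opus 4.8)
The plan is to identify $1 + H_{N(v)}$ with the expected \emph{return} time of the random walk to $v$, and then to evaluate the latter via the stationary distribution. Let $X = X_0, X_1, \dots$ be a simple random walk on $G$ and let $T_v^+ \coloneqq \min\{i \geq 1 : X_i = v\}$ be the first return time to $v$. First I would invoke \cref{cor: properties of G(np)}\ref{item: stationary distribtution}: since $G\in\cG_{k,n,p}$ is connected (as $\op{diam}(G)\leq k$) and not bipartite, $X$ is irreducible and aperiodic, with unique stationary distribution $\pi(v) = \frac{d(v)}{2|E(G)|}$. By the standard fact (Kac's formula) that for an irreducible positive recurrent Markov chain the mean return time to a state equals the reciprocal of its stationary probability, we get $\ex_v[T_v^+] = \frac{1}{\pi(v)} = \frac{2|E(G)|}{d(v)}$.

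Next I would decompose $\ex_v[T_v^+]$ according to the first step. Since $G$ is a simple graph, $X_1$ is a uniformly random element of $N(v)$, and $v\notin N(v)$, so $T_v^+ = 1 + T_{X_1 v}$; by the Markov property and linearity of expectation,
\begin{equation*}
    \ex_v[T_v^+] = 1 + \sum_{u\in N(v)} \pr_v[X_1 = u]\, H_{uv} = 1 + \frac{1}{d(v)}\sum_{u\in N(v)} H_{uv} = 1 + H_{N(v)},
\end{equation*}
where in the last equality I used $|N(v)| = d(v)$. Combining the two displays yields $H_{N(v)} = \frac{2|E(G)|}{d(v)} - 1$, as claimed.

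The argument is essentially routine; the only points requiring minor care are the invocation of Kac's formula (which needs irreducibility and positive recurrence, both guaranteed by finiteness and connectedness of $G$, with aperiodicity used only to ensure a \emph{unique} stationary distribution, already recorded in \cref{cor: properties of G(np)}) and the validity of the first-step expansion, which relies on the absence of self-loops so that every one-step move leaves $v$. Since this lemma is due to Ottolini and Steinerberger, one could alternatively just cite \cite[Lemma~1]{ottolini2023concentration} verbatim; I would include the short self-contained proof above for completeness.
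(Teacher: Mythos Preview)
Your proposal is correct and follows exactly the argument the paper sketches in its proof outline (and attributes to \cite[Lemma~1]{ottolini2023concentration}): identify the mean return time $\ex_v[T_v^+]$ with $1/\pi(v)=2|E(G)|/d(v)$ via Kac's formula, and separately compute $\ex_v[T_v^+]=1+\frac{1}{|N(v)|}\sum_{u\in N(v)}H_{uv}=1+H_{N(v)}$ by conditioning on the first step. There is nothing to add.
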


\subsection{Mixing times}

In \cite{ottolini2023concentration}, Ottolini and Steinerberger showed that, for constant $p$, a random walk on $G(n,p)$ converges rapidly to the stationary distribution. Their proof made in particular use of the spectral properties of a random graph. Using the corresponding properties for our range of $p$, that is, \cref{lemma: properties of G(np)}\cref{item:lambda1,item:lambda2,item:phi}, we can derive the following analogously. \APPENDIX{See \cref{app:mixing} for more details.}\NOAPPENDIX{See the appendix in the arXiv version of this paper for more details.}

Recall that a random walk $X$ on $G\in \cG_{k,n,p}$ starting at a fixed $w\in V(G)$ has a unique stationary distribution (\cref{cor: properties of G(np)}\cref{item: stationary distribtution}), which we denote by $\pi$, and that the probability distribution of $X$ at any stage $\ell\in \bN$ is denoted by $\mu_\ell$.

\begin{prop}[{\cite[Proposition]{ottolini2023concentration}}]\label{prop: mixing property}
    Let $\ell\in \bN$ and $G\in\cG_{k,n,p}$. Let $X=X_0,X_1,\ldots$ be a random walk on~$G$ starting at $w\in V(G)$. Then,
    \begin{equation*}
        \|\mu_\ell-\pi\| \lesssim \frac{(\log n)^{(\ell-1)/2}\sqrt{n}}{(pn)^{\ell/2}}.
    \end{equation*}
\end{prop}

\begin{cor}\label{lemma: random walk probabilities}
    Let $k\geq 2$ and $\ell\geq 3k+2$ be integers. Let $G\in \cG_{k,n,p}$ and $X= X_0,X_1,\ldots$ be a random walk on~$G$ starting at $w\in V(G)$. Then, for any $x\in V(G)$,
    \begin{equation*}
        \pr_w[X_\ell=x] = \frac{1}{n} \pm \cO\left(\frac{\sqrt{\log n}}{\sqrt{p}n^{3/2}}\right).
    \end{equation*}
\end{cor}
\begin{proof}
    Applying \cref{cor: properties of G(np)}\ref{item: stationary distribtution} and~\cref{prop: mixing property} yields%
    \COMMENT{
        \begin{align*}
            \sqrt{p}\|\mu_{\ell}-\pi\|
            &\lesssim \frac{(\log n)^{(\ell-1)/2}\sqrt{pn}}{(pn)^{\ell/2}}
            = \left(\frac{\log n}{pn}\right)^{(\ell-1)/2}
            \leq \left(\frac{1}{n^{1/k}}\right)^{(\ell-1)/2}
            \leq \frac{1}{n^{3/2+1/2k}}
            \leq \frac{\sqrt{\log n}}{n^{3/2}}.
        \end{align*}
    }
    \begin{align*}
        \left|\mu_\ell(x)-\frac{1}{n}\right|
        &\leq |\mu_\ell(x)-\pi(x)|+\left|\pi(x)-\frac{1}{n}\right|
        \lesssim \|\mu_\ell-\pi\| + \frac{\sqrt{\log n}}{\sqrt{p}n^{3/2}}
        \lesssim \frac{\sqrt{\log n}}{\sqrt{p}n^{3/2}},
    \end{align*}
    as desired.
\end{proof}

\section{Differences in hitting times}
The aim of this section is to show the following result, which shows that after walking a certain number of steps, the expected hitting time is only marginally influenced by the starting vertex.

\begin{lemma}\label{lemma: H_muv-Hmu'v}
    Let $k\geq 2$ and $\ell\geq 3k+2$ be integers. Let $G\in\cG_{k,n,p}$ and $v,w,w'\in V(G)$.
    Then,
    \begin{equation*}
        |H_{\mu_{\ell,w} v} - H_{\mu_{\ell,w'} v}| \lesssim \frac{\sqrt{\log n}}{\sqrt{pn}}.
    \end{equation*}
\end{lemma}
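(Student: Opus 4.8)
The plan is to reduce everything to a crude upper bound on individual hitting times together with the mixing estimate already available. Since $H_{vv}=0$, we have $H_{\mu_{\ell,w} v}-H_{\mu_{\ell,w'} v}=\sum_{u}(\mu_{\ell,w}(u)-\mu_{\ell,w'}(u))\,H_{uv}$, so by the trivial inequality $|\sum_u a_u b_u|\le(\max_u|b_u|)\sum_u|a_u|$ it suffices to prove the two estimates $\max_u H_{uv}\lesssim n$ and $\|\mu_{\ell,w}-\mu_{\ell,w'}\|\lesssim\frac{1}{\sqrt p}\bigl(\frac{\log n}{pn}\bigr)^{(3k+1)/2}$. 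Granting these, and using $pn\ge n^{1/k}\log n$ (so $\frac{\log n}{pn}\le n^{-1/k}<1$), the product of the two bounds is $\lesssim n\cdot\frac{1}{\sqrt p}\,n^{-(3k+1)/(2k)}=\frac{1}{\sqrt p}\,n^{-(k+1)/(2k)}$, and since $n^{1/2-(k+1)/(2k)}=n^{-1/(2k)}\le\sqrt{\log n}$ this is $\cO\bigl(\frac{\sqrt{\log n}}{\sqrt{pn}}\bigr)$, exactly the claimed bound.

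The mixing estimate I would read off directly from \cref{prop: mixing property}: for any vertex $x$ and any $\ell\ge 3k+2$, $\|\mu_{\ell,x}-\pi\|\lesssim\frac{(\log n)^{(\ell-1)/2}\sqrt n}{(pn)^{\ell/2}}=\frac{1}{\sqrt p}\bigl(\frac{\log n}{pn}\bigr)^{(\ell-1)/2}$, which is decreasing in $\ell$ because $\frac{\log n}{pn}<1$; hence it is at most $\frac{1}{\sqrt p}\bigl(\frac{\log n}{pn}\bigr)^{(3k+1)/2}$, and $\|\mu_{\ell,w}-\mu_{\ell,w'}\|\le\|\mu_{\ell,w}-\pi\|+\|\pi-\mu_{\ell,w'}\|$ finishes it. It is essential to use this $\ell$-dependent bound rather than the entrywise estimate of \cref{lemma: random walk probabilities}: summing that estimate over all $n$ vertices would only give $\|\mu_{\ell,w}-\mu_{\ell,w'}\|\lesssim\frac{\sqrt{\log n}}{\sqrt p\,\sqrt n}$, hence the useless bound $\frac{\sqrt{n\log n}}{\sqrt p}$; the hypothesis $\ell\ge 3k+2$ is precisely what makes the geometric decay in \cref{prop: mixing property} strong enough to absorb the factor $n$ coming from $\max_u H_{uv}$.

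The one ingredient not already recorded, and the step I expect to be the main obstacle, is the a~priori bound $\max_u H_{uv}\lesssim n$. Set $\ell_0:=3k+2$. Applying \cref{lemma: random walk probabilities} with $\ell=\ell_0$, starting vertex $x$ and target $v$, I get $\pr_x[X_{\ell_0}=v]\ge\frac1n-\cO\bigl(\frac{\sqrt{\log n}}{\sqrt p\,n^{3/2}}\bigr)\ge\frac{1}{2n}$ for all large $n$ and uniformly over $x$ (in particular also for $x=v$), since $\frac{\sqrt{\log n}}{\sqrt p\,\sqrt n}\le n^{-1/(2k)}=o(1)$. Now run a random walk $X=X_0,X_1,\dots$ from $u$ and inspect it only along the subsequence $X_0,X_{\ell_0},X_{2\ell_0},\dots$; conditionally on $X_0,\dots,X_{(j-1)\ell_0}$ the event $\{X_{j\ell_0}=v\}$ has probability at least $\frac1{2n}$, so the first index $R:=\min\{j\ge 1:X_{j\ell_0}=v\}$ is stochastically dominated by a geometric random variable of mean $2n$. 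Since $X_{R\ell_0}=v$ forces $T_{uv}\le R\ell_0$, we obtain $H_{uv}=\ex_u T_{uv}\le\ell_0\,\ex[R]\le 2\ell_0 n=\cO(n)$, using that $k$ (hence $\ell_0$) is fixed. After this the proof is just the arithmetic of the first paragraph; the only place where care is genuinely needed is checking that the product of the two estimates stays below $\frac{\sqrt{\log n}}{\sqrt{pn}}$ uniformly across the whole admissible range of $p$, not merely at its two endpoints.
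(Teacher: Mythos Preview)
Your proof is correct, but it follows a genuinely different route from the paper's. The paper does \emph{not} prove the clean bound $\max_u H_{uv}\lesssim n$; instead it establishes only the cruder estimate $H_{uv}\lesssim (pn)^k$ via a diameter-based path-following argument (\cref{lemma: H_wv lesssim n}), and then compensates with a centering trick: writing $H_{uv}=n+(H_{uv}-n)$ splits $\sum_{u\neq v}(\mu_{\ell,w}(u)-\mu_{\ell,w'}(u))H_{uv}$ into a piece $n|\mu_{\ell,w}(v)-\mu_{\ell,w'}(v)|$ handled by the pointwise estimate of \cref{lemma: random walk probabilities}, plus a piece $(pn)^k\|\mu_{\ell,w}-\mu_{\ell,w'}\|$ handled by \cref{prop: mixing property}. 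You instead feed \cref{lemma: random walk probabilities} in earlier, using it together with a block-geometric argument to prove the sharper hitting-time bound $H_{uv}=\cO(n)$ directly; once that is in hand, the trivial Hölder bound $(\max_u H_{uv})\cdot\|\mu_{\ell,w}-\mu_{\ell,w'}\|$ suffices and no centering is needed. Your approach is arguably cleaner and shorter; the paper's route has the minor conceptual advantage that its hitting-time lemma depends only on diameter and degree regularity (not on mixing), but since \cref{prop: mixing property} is invoked either way this buys nothing here.
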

We prove this result by finding a simple upper bound for $|H_{wv}-n|$ and using the fact that $\|\mu_{\ell,w}-\pi\|$ tends to 0 quickly.
To find an upper bound for $|H_{wv}-n|$, we need the following lemma, which states that a random walk on $G\in\cG_{k,n,p}$ starting at~$w$ is expected to hit~$v$ in~$\cO((pn)^k)$ steps.

\begin{lemma}\label{lemma: H_wv lesssim n}
    Let $G\in\cG_{k,n,p}$ and $v,w\in V(G)$. Then, $H_{wv}\lesssim (pn)^k$.
\end{lemma}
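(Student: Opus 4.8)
The plan is to exploit the bounded diameter of $G$ together with the Markov property. Since $G\in\cG_{k,n,p}$ satisfies $\op{diam}(G)\leq k$ by \cref{lemma: properties of G(np)}\ref{item: diam G}, for every vertex $z\in V(G)$ there is a walk $z=u_0,u_1,\dots,u_j=v$ with $j\leq k$. A random walk started at $z$ follows this particular walk during its first $j$ steps with probability exactly $\prod_{i=0}^{j-1}\frac{1}{d(u_i)}$. By \cref{lemma: properties of G(np)}\ref{item: N(v)} we have $d(u)\leq 2pn$ for every $u\in V(G)$ once $n$ is large (the claim is asymptotic, so this is harmless), and $pn\geq n^{1/k}\log n\geq 1$; hence the above probability is at least $(2pn)^{-j}\geq (2pn)^{-k}=:q\leq 1$. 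In particular $\pr_z[T_{zv}\leq k]\geq q$ for every $z\in V(G)$ (trivially so if $z=v$), since hitting $v$ within $k$ steps via some other route only helps.

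Next I would bootstrap this into a tail bound for $T_{wv}$. Fix $w$ and, for $m\in\bN$, condition on the trajectory up to time $(m-1)k$ on the event $\{T_{wv}>(m-1)k\}$; then $X_{(m-1)k}$ equals some vertex $z$, and by the Markov property together with the previous paragraph the conditional probability of \emph{not} hitting $v$ during the next $k$ steps is at most $1-q$. Iterating gives $\pr_w[T_{wv}>mk]\leq(1-q)^m$ for all $m\in\bN_0$.

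Finally I would sum the tail. Using that $\{T_{wv}>t\}$ is decreasing in $t$,
\[
H_{wv}=\ex_w T_{wv}=\sum_{t\geq 0}\pr_w[T_{wv}>t]\leq k\sum_{m\geq 0}\pr_w[T_{wv}>mk]\leq k\sum_{m\geq 0}(1-q)^m=\frac{k}{q}=k(2pn)^k=\cO\bigl((pn)^k\bigr),
\]
since $k$ is a fixed constant.

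The argument is essentially routine; the only point requiring a little care is the Markov-property step—making sure the conditioning on the past trajectory is formalised correctly and that "following one fixed walk of length at most $k$" genuinely gives a lower bound on $\pr_z[T_{zv}\leq k]$. I do not expect any real obstacle beyond writing this cleanly, and the bounds on degrees and on $pn$ needed above are already available from \cref{lemma: properties of G(np)}.
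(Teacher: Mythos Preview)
Your proof is correct and follows essentially the same approach as the paper: both use the diameter bound to show that from any vertex the walk hits $v$ within $k$ steps with probability at least $q\gtrsim (pn)^{-k}$, and then iterate via the Markov property over blocks of length $k$. The only cosmetic difference is that the paper phrases the conclusion via stochastic domination by a geometric variable, whereas you sum the tail $\sum_{t\geq 0}\pr_w[T_{wv}>t]$ directly; these are equivalent.
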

\begin{proof}
    Let $X = X_0,X_1,\ldots$ denote a random walk on~$G$ starting at~$w$.
    Note that for all $u\in V(G)$, there is a $u$-$v$ path of length $\op{dist}(u,v)$ in~$G$.
    Note further that, for any $\ell\in\bN$ and any adjacent $u',u''\in V(G)$, we have $\pr[X_\ell = u'' \mid X_{\ell-1}=u']\gtrsim \frac{1}{pn}$ by~\cref{cor: properties of G(np)}\cref{item: 1/N(v)}. Thus, \cref{lemma: properties of G(np)}\cref{item: diam G} implies that $\pr_u[X_{\op{dist}(u,v)}=v] \gtrsim \frac{1}{(pn)^{\op{dist}(u,v)}} \geq \frac{1}{(pn)^k}$ for all $u\in V(G)$.
    Let~$c$ be a constant such that $\pr_u[X_{\op{dist}(u,v)}=v] \geq \frac{c}{(pn)^k}\eqqcolon q$ for any $u\in V(G)$.
    This implies that for any $j\in\bN_0$, we have $\pr_w[X_i =v \text{ for some } jk<i\leq (j+1)k]\geq q$.\COMMENT{Markov property: $\pr_w[X_{jk+\ell}=v] = \pr_u[X_\ell=v]$, where~$u$ is the vertex with $X_{jk}=u$.}
    For each $j\in \bN_0$, let $Y_j=1$ if $X_i=v$ for some $jk<i\leq (j+1)k$ and let $Y_j=0$ otherwise. Then $Y_0,Y_1,\ldots$ stochastically dominates a sequence $Z_0,Z_1,\ldots$ of independent $\op{Ber}(q)$-distributed $\{0,1\}$-random variables.
    As the expected time~$i$ where $Z_i=1$ for the first time is $\frac{1}{q}-1$, the statement follows.
\end{proof}

The following corollary quantifies how far $H_{wv}$ deviates from $n$. It follows directly from~\cref{lemma: H_wv lesssim n} and the fact that $(pn)^k\geq n$.

\begin{cor}\label{cor: H_wv-n}
   Let $G\in \cG_{k,n,p}$ and $v\neq w\in V(G)$. Then, $|H_{wv}-n| \lesssim (pn)^{k}$.
\end{cor}

We are now equipped to prove the main result of this section.
\begin{proof}[Proof of \cref{lemma: H_muv-Hmu'v}]
    Using \cref{cor: H_wv-n} we obtain
    \begin{align*}
        |H_{\mu_{\ell,w}v} - H_{\mu_{\ell,w'}v}|
        &= \left|\sum_{u\neq v} (\mu_{\ell,w}(u) - \mu_{\ell,w'}(u)) H_{uv}\right|
        = \left|\sum_{u\neq v}(\mu_{\ell,w}(u)-\mu_{\ell,w'}(u))(n+H_{uv}-n) \right| \\
        &\leq n\left|\sum_{u\neq v}(\mu_{\ell,w}(u)-\mu_{\ell,w'}(u))\right| + \sum_{u\neq v}|\mu_{\ell,w}(u)-\mu_{\ell,w'}(u)||H_{uv}-n| \\
        &\lesssim n|(1-\mu_{\ell,w}(v))-(1-\mu_{\ell,w'}(v))|+ (pn)^{k} \sum_{u\neq v}|\mu_{\ell,w}(u)-\mu_{\ell,w'}(u)| \\
        &\leq n|\mu_{\ell,w}(v)-\mu_{\ell,w'}(v)| + (pn)^{k} \|\mu_{\ell,w}-\mu_{\ell,w'}\|.
    \end{align*}
    It remains to show that both of these terms are asymptotically bounded by $\frac{\sqrt{\log n}}{\sqrt{pn}}$.
    First, \cref{lemma: random walk probabilities} yields $n|\mu_{\ell,w}(v)-\mu_{\ell,w'}(v)|\lesssim \frac{\sqrt{\log n}}{\sqrt{pn}}$, as desired.
    Second, \cref{prop: mixing property} yields
    \[
        \|\mu_{\ell,w}-\mu_{\ell,w'}\| \leq \|\mu_{\ell,w}-\pi\| + \|\pi-\mu_{\ell,w'}\| \lesssim \frac{(\log n)^{(\ell-1)/2}\sqrt{n}}{(pn)^{\ell/2}},
    \]
    and thus, as~$\ell\geq 3k+2$,%
        \COMMENT{We have
            \begin{align*}
                \sqrt{pn}(pn)^{k} \|\mu_\ell-\mu_\ell'\|
                &\lesssim \frac{(\log n)^{(\ell-1)/2}\sqrt{n}}{(pn)^{((\ell-1)/2)-k}}
                \leq \frac{(\log n)^{(\ell-1)/2}\sqrt{n}}{(\log n)^{((\ell-1)/2)-k}n^{((\ell-1)/2k)-1}} \\
                &= \frac{\log^kn}{n^{((\ell-1)/2k)-1/2}}
                \leq \frac{\log^kn}{n^{((k+1)/2k)-1/2}}
                = \frac{\log^kn}{n^{1/2k}}
                \leq \sqrt{\log n}.
            \end{align*}
        }
    \begin{align*}
        (pn)^{k} \|\mu_{\ell,w}-\mu_{\ell,w'}\| \lesssim \frac{\sqrt{\log n}}{\sqrt{pn}},
    \end{align*}
    as desired.
\end{proof}

\section{A hitting time formula}
It is easy to see that for $\op{dist}(w,v)\geq\ell$, we have
\[
    H_{wv} = \ell + H_{\mu_\ell v}.
\]
For $\op{dist}(w,v) < \ell$, this formula is not true in general, but the next lemma gives a precise formula for~$H_{wv}$ in terms of~$H_{\mu_\ell v}$, up to some small error term.

\begin{lemma}\label{lemma: formula for Hitting times}
    Let $\ell\in \bN$. Let $G\in\cG_{k,n,p}$ and $v\neq w\in V(G)$.
    Let~$X=X_0,X_1,\ldots$ be a random walk on~$G$ starting at~$w$. Then,
    \begin{equation*}
        H_{wv} = \ell + H_{\mu_\ell v} - \frac{2|E(G)|}{d(v)}\sum_{i=1}^{\ell-1} \pr_w[X_i=v] \pm \cO\left(\frac{1}{pn}\right).
    \end{equation*}
\end{lemma}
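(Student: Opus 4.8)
The plan is to establish the equivalent formula
\[
    H_{wv}-H_{\mu_\ell v}=\ell-\frac{2|E(G)|}{d(v)}\sum_{i=1}^{\ell-1}\pr_w[X_i=v]\pm\cO\!\left(\frac{1}{pn}\right)
\]
by comparing the walk from~$w$ with the hitting problem restarted at the deterministic time~$\ell$. Put $R\coloneqq\inf\{s\ge 0:X_{\ell+s}=v\}$. Conditioning on~$X_\ell$ and applying the Markov property at time~$\ell$ gives $\ex_w[R\mid X_\ell]=H_{X_\ell v}$ (with $H_{vv}\coloneqq 0$), hence $H_{\mu_\ell v}=\ex_w[R]$. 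Writing expected hitting times as tail sums, $H_{uv}=\sum_{m\ge0}\pr_u[T_{uv}>m]$, and likewise $R=\sum_{m\ge 0}\1[v\notin\{X_\ell,\dots,X_{\ell+m}\}]$, one obtains
\[
    H_{wv}=\sum_{m\ge0}\pr_w[v\notin\{X_1,\dots,X_m\}],\qquad
    H_{\mu_\ell v}=\sum_{m\ge0}\pr_w[v\notin\{X_\ell,\dots,X_{\ell+m}\}].
\]
Splitting the first sum at $m=\ell$ and using $\{v\notin\{X_1,\dots,X_m\}\}=\{v\notin\{X_1,\dots,X_{\ell-1}\}\}\cap\{v\notin\{X_\ell,\dots,X_m\}\}$ for $m\ge\ell$, together with $\sum_{m\ge0}\1[v\notin\{X_\ell,\dots,X_{\ell+m}\}]=R$, yields the exact identity
\[
    H_{wv}-H_{\mu_\ell v}=\sum_{m=0}^{\ell-1}\pr_w[T_{wv}>m]\;-\;\ex_w\!\big[R\cdot\1_{T_{wv}\le\ell-1}\big],
\]
all series being finite by \cref{lemma: H_wv lesssim n}.

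For the first term, $\sum_{m=0}^{\ell-1}\pr_w[T_{wv}>m]=\ell-\sum_{m=1}^{\ell-1}\pr_w[T_{wv}\le m]$, and $\pr_w[T_{wv}\le m]\le\sum_{i=1}^{m}\pr_w[X_i=v]\le\frac{2m}{pn}$ by \cref{cor: properties of G(np)}\cref{item: Twv}, so (as $\ell=\cO(1)$) this equals $\ell\pm\cO(\frac1{pn})$. The second term is the heart of the matter. On $\{T_{wv}\le\ell-1\}$ one has $R=0$ whenever $X_\ell=v$, so it suffices to treat the event where additionally $X_\ell\neq v$; on that event let $i\in[\ell-1]$ be the time of the \emph{last} visit of the walk to~$v$ before time~$\ell$, so that $\{i=j\}=\{X_j=v\}\cap\{X_{j+1},\dots,X_\ell\neq v\}$ for $j\in[\ell-1]$ and these events partition it. The key observation is that on $\{i=j\}$ the first return of the walk to~$v$ after time~$j$ takes place at time exactly $\ell+R$; hence $(\ell-j)+R=\tau_v^+$, where $\tau_v^+\coloneqq\inf\{s\ge1:X_{j+s}=v\}$ is a first-return time to~$v$ which, by the Markov property at time~$j$ (at which $X_j=v$), is distributed as a fresh return time from~$v$, and moreover $\{i=j\}=\{X_j=v\}\cap\{\tau_v^+>\ell-j\}$. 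Consequently, writing $g(r)\coloneqq\ex_v[(\tau_v^+-r)^+]$ for a walk started at~$v$,
\[
    \ex_w\!\big[R\cdot\1_{T_{wv}\le\ell-1}\big]=\sum_{j=1}^{\ell-1}\pr_w[X_j=v]\,g(\ell-j).
\]

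It remains to estimate~$g$. Here $g(0)=\ex_v[\tau_v^+]=\frac{2|E(G)|}{d(v)}$ is the mean return time to~$v$ (see \cref{cor: properties of G(np)}\cref{item: stationary distribtution}), and $g(r)-g(r+1)=\pr_v[\tau_v^+>r]=1-\cO(\frac{r}{pn})$ by \cref{cor: properties of G(np)}\cref{item: Twv}; summing over $r$ gives $g(r)=\frac{2|E(G)|}{d(v)}-r\pm\cO(\frac{r^2}{pn})$ for $0\le r\le\ell$. Substituting this, the main contribution is $\frac{2|E(G)|}{d(v)}\sum_{j=1}^{\ell-1}\pr_w[X_j=v]$, and since $\sum_{j=1}^{\ell-1}\pr_w[X_j=v]=\cO(\frac{\ell}{pn})$ by \cref{cor: properties of G(np)}\cref{item: Twv} and $\ell=\cO(1)$, the remaining $-(\ell-j)$ and $\cO((\ell-j)^2/(pn))$ terms are $\cO(\frac1{pn})$ in total; thus $\ex_w[R\cdot\1_{T_{wv}\le\ell-1}]=\frac{2|E(G)|}{d(v)}\sum_{j=1}^{\ell-1}\pr_w[X_j=v]\pm\cO(\frac1{pn})$. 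Combining this with the estimate of the first term yields the displayed formula, which, rearranged, is exactly the assertion of the lemma.

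The step requiring care is the last-visit decomposition. Conditioning merely on~$X_\ell$ would express the second term as $\sum_{u\neq v}\pr_w[T_{wv}\le\ell-1,\,X_\ell=u]\,H_{uv}$, which is useless since $H_{uv}$ can be of order $(pn)^k$ while $\pr_w[T_{wv}\le\ell-1,\,X_\ell=u]$ is only $\cO(\frac1{pn})$-small. Decomposing instead by the \emph{last} visit to~$v$ before time~$\ell$ is exactly what produces the cancellation $(\ell-j)+R=\tau_v^+$ and reduces everything to the return-time statistics at~$v$ controlled by \cref{cor: properties of G(np)}; the remaining estimates are then routine.
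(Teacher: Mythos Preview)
Your proof is correct and takes a genuinely different route from the paper's. The paper expands $H_{\mu_\ell v}=\sum_x\mu_\ell(x)\sum_{m\ge0}m\,\pr[T_{xv}=m]$ directly, rewrites it as $\sum_{m\ge0}m\,\pr_w[X_{m+\ell}=v;\,X_\ell,\dots,X_{m+\ell-1}\neq v]$, and then decomposes according to whether the walk visits $v$ at some time $i\in[\ell-1]$. This is handled via three auxiliary lemmas (a first-return identity, an index-shifting identity, and the estimate $\pr[T_{wv}=i]=\pr_w[X_i=v]\pm\cO(1/(pn)^2)$), and the factor $\frac{2|E(G)|}{d(v)}$ enters at the end through \cref{lemma: H_nuv}, i.e.\ through the identity $\frac{1}{|N(v)|}\sum_{u\in N(v)}H_{uv}+1=\frac{2|E(G)|}{d(v)}$.

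Your argument replaces this algebraic bookkeeping with a probabilistic coupling: the tail-sum representation and the exact identity $H_{wv}-H_{\mu_\ell v}=\sum_{m=0}^{\ell-1}\pr_w[T_{wv}>m]-\ex_w[R\,\1_{T_{wv}\le\ell-1}]$ isolate precisely what needs to be estimated, and your last-visit decomposition turns the second term into a sum of truncated return-time expectations $g(\ell-j)$. The factor $\frac{2|E(G)|}{d(v)}$ then appears directly as the mean return time $g(0)=\ex_v[\tau_v^+]=1/\pi(v)$, bypassing \cref{lemma: H_nuv} and the three auxiliary lemmas entirely. This makes the mechanism (each pre-$\ell$ visit to $v$ ``costs'' one return time) transparent; the paper's approach is more computational but has the minor advantage that all manipulations stay at the level of finite sums of probabilities, so no care about interchanging sums and expectations is needed.
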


We need the following lemmas. First, \cref{lemma: formula for Hitting times lemma} yields a formula for the probability of the random walk returning to~$v$ for the first time after a certain number of steps. 
\begin{lemma}\label{lemma: formula for Hitting times lemma}
    Let $\ell\in \bN$ and~$G$ be a graph and $X=X_0,X_1,\ldots$ a random walk on~$G$. Then,
    \begin{equation*}
        \pr[X_\ell=v;X_{1},\ldots,X_{\ell-1}\neq v \mid X_0=v]
        = \frac{1}{|N(v)|}\sum_{u\in N(v)}\pr[T_{uv} = \ell-1].
    \end{equation*}
\end{lemma}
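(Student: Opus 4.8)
The plan is to condition on the first step of the walk and then apply the Markov property together with time-homogeneity; this is a routine computation, and there is no real obstacle beyond keeping the index shift straight.

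First I would note that, starting from $X_0=v$, the walk moves to a uniformly random neighbour, so $\pr[X_1=u\mid X_0=v]=\frac{1}{|N(v)|}$ for every $u\in N(v)$ and $0$ otherwise. Since $G$ is simple (no loops), $u\neq v$ for all $u\in N(v)$, so the constraint $X_1\neq v$ holds automatically once we condition on $X_1=u\in N(v)$. Hence, by the law of total probability,
\begin{equation*}
    \pr[X_\ell=v;\,X_1,\dots,X_{\ell-1}\neq v\mid X_0=v]=\frac{1}{|N(v)|}\sum_{u\in N(v)}\pr[X_\ell=v;\,X_2,\dots,X_{\ell-1}\neq v\mid X_0=v,\,X_1=u].
\end{equation*}

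Next I would identify each summand with $\pr[T_{uv}=\ell-1]$. Conditionally on $X_1=u$, the shifted process $X_1,X_2,\dots$ is a random walk started at $u$, independent of $X_0$ by the Markov property and time-homogeneity; re-indexing time by one step, the event $\{X_\ell=v;\,X_2,\dots,X_{\ell-1}\neq v\}$ is precisely the event that this walk from $u$ first reaches $v$ at time $\ell-1$, i.e.\ $\{T_{uv}=\ell-1\}$. Substituting this into the display above yields the claim. In the degenerate case $\ell=1$ both sides vanish, since $X_1\neq v$ always and $T_{uv}\geq 1$ for $u\in N(v)$; the argument above covers this case directly.

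The only point requiring a little care is the index bookkeeping: the walk conditioned on $X_1=u$ needs exactly $\ell-1$ further steps to hit $v$, which is why the right-hand side features $T_{uv}=\ell-1$ rather than $T_{uv}=\ell$; and the constraint $X_1\neq v$ is free precisely because $u\in N(v)$ forbids $u=v$. There is otherwise nothing substantive to prove.
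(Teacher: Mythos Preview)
Your proof is correct and follows essentially the same approach as the paper: condition on the first step $X_1=u\in N(v)$, use the Markov property and time-homogeneity to shift the index by one, and identify the resulting event with $\{T_{uv}=\ell-1\}$. The only cosmetic difference is that you explicitly note $u\neq v$ (so the constraint $X_1\neq v$ is free) and separately check the $\ell=1$ case, whereas the paper folds these into the chain of equalities without comment.
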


\begin{proof}
    We calculate
            \begin{align*}
                &\pr[X_\ell=v;X_{1},\ldots,X_{\ell-1}\neq v \mid X_0=v] \\
                &\qquad = \sum_{u\in N(v)} \pr[X_\ell=v;X_{1}=u; X_{2}\ldots,X_{\ell-1}\neq v \mid X_0=v] \\
                &\qquad = \sum_{u\in N(v)} \pr[X_{1}=u\mid X_0=v]\pr[X_\ell=v;X_{1},\ldots,X_{\ell-1}\neq v \mid X_{1}=u] \\
                &\qquad = \frac{1}{|N(v)|}\sum_{u\in N(v)} \pr[X_{\ell-1}=v;X_{0},\ldots,X_{\ell-2}\neq v \mid X_{0}=u] \\
                &\qquad = \frac{1}{|N(v)|}\sum_{u\in N(v)} \pr[T_{uv}=\ell-1],
            \end{align*}
            as desired.
\end{proof}

In the next lemma, we give a formula for the expected ``shifted" hitting time.
\begin{lemma}\label{lemma: index shifting}
    Let $\ell\in \bN_0$. Let~$G$ be a graph, $v\neq w\in V(G)$, and let $X=X_0,X_1,\ldots$ be a random walk on~$G$. Then,
    \begin{equation*}
        \sum_{m\geq0} m\pr[T_{wv} = m+\ell] = H_{wv} - \ell - \sum_{m=1}^{\ell-1} (m-\ell) \pr[T_{wv} = m].
    \end{equation*}
\end{lemma}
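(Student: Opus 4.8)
The plan is to prove Lemma~\ref{lemma: index shifting} by a direct reindexing of the defining series. This is a purely distributional identity about $T_{wv}$: it uses no structure of $G$ beyond $\ex T_{wv}<\infty$, which we may assume throughout (if $\ex T_{wv}=\infty$ the statement is vacuous, both sides being infinite; and in all our applications $G\in\cG_{k,n,p}$, for which $H_{wv}\lesssim (pn)^k$ by \cref{lemma: H_wv lesssim n} and $T_{wv}$ is almost surely finite, so every series below converges absolutely and may be split and reindexed freely).

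First I would write $T\coloneqq T_{wv}$, and recall that $T\geq 1$ almost surely since $v\neq w$, so that $H_{wv}=\ex T=\sum_{m\geq 1}m\,\pr[T=m]$ and $\sum_{m\geq 1}\pr[T=m]=\pr[T<\infty]=1$. Substituting $m\mapsto m-\ell$ in the left-hand side gives
\[
    \sum_{m\geq 0} m\,\pr[T=m+\ell] = \sum_{m\geq \ell}(m-\ell)\,\pr[T=m].
\]
Next I would recover the full tail sum by adding back the terms with $1\leq m\leq \ell-1$:
\[
    \sum_{m\geq \ell}(m-\ell)\,\pr[T=m] = \sum_{m\geq 1}(m-\ell)\,\pr[T=m] \;-\; \sum_{m=1}^{\ell-1}(m-\ell)\,\pr[T=m].
\]
Finally, using $\sum_{m\geq 1}m\,\pr[T=m]=H_{wv}$ and $\sum_{m\geq 1}\pr[T=m]=1$, the first sum on the right equals $H_{wv}-\ell$, and combining the two displays yields exactly the claimed identity.

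There is no genuine obstacle here; the only point that warrants a word of justification is the rearrangement of the (a priori infinite) series in the middle step, which is legitimate precisely because $\sum_{m\geq 1}m\,\pr[T=m]=H_{wv}$ converges, so all series involved converge absolutely. (One should also note the trivial edge cases $\ell=0$ and $\ell=1$, where the finite correction sum $\sum_{m=1}^{\ell-1}$ is empty, are covered by the same computation.)
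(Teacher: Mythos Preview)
Your proof is correct and follows essentially the same route as the paper's: reindex $m\mapsto m-\ell$, add back the finitely many missing terms, and use $\sum_{m\geq 1}\pr[T_{wv}=m]=1$ together with the definition of $H_{wv}$. The only cosmetic difference is that the paper starts the full sum at $m\geq 0$ rather than $m\geq 1$, which is immaterial since $\pr[T_{wv}=0]=0$.
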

\begin{proof}
    Note that, as $v\neq w$, we have $\pr[T_{wv}=0]=0$. Therefore,
    \begin{align*}
        \sum_{m\geq0} m\pr[T_{wv}=m+\ell] &= \sum_{m\geq \ell}(m-\ell)\pr[T_{wv}=m] \\
        &= \sum_{m\geq 0} (m-\ell)\pr[T_{wv}=m] - \sum_{m=1}^{\ell-1} (m-\ell)\pr[T_{wv}=m] \\
        &= H_{wv} - \ell - \sum_{m=1}^{\ell-1} (m-\ell)\pr[T_{wv}=m],
    \end{align*}
    as desired.
\end{proof}

The next lemma shows that the probability that the random walk hits~$v$ for the first time at constant time~$i$ is basically the same as the probability that the random walk hits~$v$ at time~$i$.
\begin{lemma}\label{lemma: pr T_wv = i}
    Let $i\in\bN$. Let~$G\in\cG_{k,n,p}$ and $v\neq w\in V(G)$. Let $X=X_0,X_1,\ldots$ be a random walk on~$G$. Then,
    \begin{equation*}
        \pr[T_{wv}=i] = \pr_w[X_i=v] \pm \cO\left(\frac{1}{p^2n^2}\right).
    \end{equation*}
\end{lemma}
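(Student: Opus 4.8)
The plan is to compare the two quantities via a first-passage decomposition. First I would write, for any $i \in \bN$, the identity
\begin{equation*}
    \pr_w[X_i = v] = \sum_{j=1}^{i} \pr[T_{wv} = j]\,\pr_v[X_{i-j} = v],
\end{equation*}
which holds because, since $w \neq v$ forces $T_{wv} \geq 1$, the event $\{X_i = v\}$ is the disjoint union over $j \in \{1,\dots,i\}$ of the events $\{T_{wv} = j,\ X_i = v\}$, and on $\{T_{wv} = j\}$ — which is measurable with respect to $X_0,\dots,X_j$ and on which $X_j = v$ — the Markov property lets the walk restart at $v$, so $\pr_w[T_{wv}=j,\ X_i=v] = \pr[T_{wv}=j]\,\pr_v[X_{i-j}=v]$. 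Peeling off the $j=i$ term, for which $\pr_v[X_0=v]=1$, then gives
\begin{equation*}
    \pr_w[X_i = v] - \pr[T_{wv} = i] = \sum_{j=1}^{i-1} \pr[T_{wv} = j]\,\pr_v[X_{i-j} = v].
\end{equation*}

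Next I would bound the right-hand side termwise. For every $j$ with $1 \leq j \leq i-1$, both indices $j$ and $i-j$ lie in $\bN$, so \cref{cor: properties of G(np)}\cref{item: Twv} gives $\pr[T_{wv}=j] \leq \frac{2}{pn}$ and $\pr_v[X_{i-j}=v] \leq \frac{2}{pn}$. Summing, the error is at most $(i-1)\cdot\frac{4}{p^2n^2}$, which is $\cO(1/(p^2n^2))$ since $i$ is independent of~$n$ (indeed, in every application $i \leq 3k+1$).

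I do not expect a genuine obstacle here: the only points needing a little care are the clean justification of the first-passage identity — in particular that $X_i = v$ forces $1 \leq T_{wv} \leq i$, using $w \neq v$ — and keeping both indices $j$ and $i-j$ in the range $\bN$ where \cref{cor: properties of G(np)}\cref{item: Twv} is available. Conceptually, the $\cO(1/(p^2n^2))$ error is just the product of two copies of the pointwise $\cO(1/(pn))$ bound on the probability of visiting~$v$, which is what makes the passage from ``probability of first hitting $v$ at time $i$'' to ``probability of being at $v$ at time $i$'' essentially free.
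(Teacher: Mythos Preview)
Your proof is correct. Both your argument and the paper's rest on the same idea---decompose $\{X_i=v\}$ according to a visit time at $v$ and bound each error term as a product of two $\cO(1/(pn))$ probabilities from \cref{cor: properties of G(np)}\cref{item: Twv}---but you decompose by the \emph{first} visit to $v$, while the paper decomposes by the \emph{last} visit to $v$ strictly before time $i$ (so the sum there runs only up to $j=i-2$, since the graph is simple). The paper then rewrites the return probability $\pr[X_{i-j}=v,\,X_1,\dots,X_{i-j-1}\neq v\mid X_0=v]$ via \cref{lemma: formula for Hitting times lemma} as an average of $\pr[T_{uv}=i-j-1]$ over $u\in N(v)$ before invoking \cref{cor: properties of G(np)}\cref{item: Twv}. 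Your first-passage route is slightly cleaner: it bypasses \cref{lemma: formula for Hitting times lemma} entirely and applies \cref{cor: properties of G(np)}\cref{item: Twv} directly to both factors $\pr[T_{wv}=j]$ and $\pr_v[X_{i-j}=v]$ (the $j=i-1$ term even vanishes, since $\pr_v[X_1=v]=0$). Both give the same $(i-1)\cdot\cO(1/(p^2n^2))$ error.
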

\begin{proof}
    \Cref{cor: properties of G(np)}\ref{item: Twv} and \cref{lemma: formula for Hitting times lemma} imply
    \begin{align*}
        \pr[T_{wv}=i]
        &= \pr_w[X_i=v] - \sum_{j=1}^{i-2} \pr_w[X_i=v, X_{j+1},\ldots,X_{i-1}\neq v, X_j=v] \\
        &= \pr_w[X_i=v] - \sum_{j=1}^{i-2} \pr_w[X_j=v]\pr[X_i=v,X_{j+1},\ldots,X_{i-1} \neq v \mid X_j=v] \\
        &= \pr_w[X_i=v] - \sum_{j=1}^{i-2} \pr_w[X_j=v]\pr[X_{i-j}=v,X_{1},\ldots,X_{i-j-1} \neq v \mid X_0=v] \\
        &= \pr_w[X_i=v] - \sum_{j=1}^{i-2} \pr_w[X_j=v]\frac{1}{|N(v)|}\sum_{u\in N(v)}\pr[T_{uv}=i-j-1] \\
        &= \pr_w[X_i=v] \pm \cO\left(\frac{1}{p^2n^2}\right),
    \end{align*}
    as desired.
\end{proof}

\begin{proof}[Proof of~\cref{lemma: formula for Hitting times}]
    We have
    \begin{align}\label{equation: H_muell_1}
        H_{\mu_\ell v} &= \sum_x \mu_\ell(x) H_{xv} = \sum_{m\geq0}m\sum_x\pr_w[X_\ell=x]\pr[T_{xv}=m].
    \end{align}
    For $m\geq 1$, we have that
    \begin{align*}
        \sum_x\pr_w[X_\ell=x]\pr[T_{xv}=m] &= \sum_x\pr_w[X_\ell=x]\pr[X_m=v;X_0,\ldots,X_{m-1}\neq v \mid X_0=x] \\
        &= \sum_x \pr_w[X_\ell=x]\pr_w[X_{m+\ell}=v;X_\ell,\ldots,X_{m+\ell-1}\neq v\mid X_\ell=x] \\
        &= \sum_x \pr_w[X_{m+\ell}=v;X_\ell=x;X_\ell,\ldots,X_{m+\ell-1}\neq v] \\
        &= \pr_w[X_{m+\ell}=v;X_\ell,\ldots,X_{m+\ell-1}\neq v].
    \end{align*}
    Inserting in~\eqref{equation: H_muell_1} and applying~\cref{lemma: formula for Hitting times lemma} yields
    \begin{align*}\label{equation: H_muellv}
        H_{\mu_\ell v} &= \sum_{m\geq0}m\pr_w[X_{m+\ell}=v;X_\ell,\ldots,X_{m+\ell-1}\neq v] \nonumber \\
        &= \sum_{m\geq0}m\pr[T_{wv}=m+\ell] + \sum_{m\geq0}m\sum_{i=1}^{\ell-1} \pr_w[X_{m+\ell} = v; X_i = v; X_{i+1},\ldots,X_{m+\ell-1}\neq v] \nonumber \\
        &= \sum_{m\geq0}m\pr[T_{wv}=m+\ell] \\
        &\qquad\qquad + \sum_{m\geq0}m\sum_{i=1}^{\ell-1} \pr_w[X_i=v] \pr_w[X_{m+\ell} = v; X_{i+1},\ldots,X_{m+\ell-1}\neq v \mid X_i=v] \nonumber \\
        &= \sum_{m\geq0}m\pr[T_{wv}=m+\ell]\\
        &\qquad \qquad + \sum_{m\geq0}m\sum_{i=1}^{\ell-1} \pr_w[X_i=v] \pr[X_{m+\ell-i} = v; X_1,\ldots,X_{m+\ell-i-1}\neq v \mid X_0=v] \nonumber \\
        &= \sum_{m\geq0}m\pr[T_{wv}=m+\ell] + \sum_{i=1}^{\ell-1} \pr_w[X_i=v] \frac{1}{|N(v)|}\sum_{u\in N(v)} \sum_{m\geq0}m \pr[T_{uv} = m+\ell-i-1].
    \end{align*}
    Applying~\cref{lemma: index shifting} and using \cref{cor: properties of G(np)}\cref{item: Twv}, we obtain
    \begin{align*}
        H_{\mu_\ell v} &= H_{wv} - \ell - \sum_{m=1}^{\ell-1} (m-\ell)\pr[T_{wv}=m] \\
        &\quad + \sum_{i=1}^{\ell-1} \pr_w[X_i=v] \frac{1}{|N(v)|}\sum_{u\in N(v)} \left( H_{uv} - (\ell-i-1) - \sum_{m=1}^{\ell-i-2} (m-(\ell-i-1)) \pr[T_{uv}=m] \right) \\
        &= H_{wv} - \ell - \sum_{i=1}^{\ell-1} (i-\ell)\pr[T_{wv}=i]
        + \sum_{i=1}^{\ell-1}\pr_w[X_i=v]\frac{1}{|N(v)|}\sum_{u\in N(v)} H_{uv} \pm \cO\left(\frac{1}{pn}\right).
    \end{align*}
    \Cref{lemma: pr T_wv = i} and \cref{cor: properties of G(np)}\cref{item: Twv} yield%
        \COMMENT{We have
            \begin{equation*}
                -\sum_{i=1}^{\ell-1} (i-\ell)\pr[T_{wv}=i]
                = -\sum_{i=1}^{\ell-1} (i-\ell+1)\pr_w[X_i=v] + \sum_{i=1}^{\ell-1} \pr_w[X_i=v] \pm \cO\left(\frac{1}{p^2n^2}\right) = \sum_{i=1}^{\ell-1}\pr_w[X_i=v] \pm \cO\left(\frac{1}{pn}\right).
            \end{equation*}}
    \begin{align*}
        H_{\mu_\ell v} &= H_{wv} - \ell
        + \sum_{i=1}^{\ell-1}\pr_w[X_i=v] \left(\frac{1}{|N(v)|}\sum_{u\in N(v)} H_{uv}+1\right)
        \pm\cO\left(\frac{1}{pn}\right).
    \end{align*}
    Finally, rearranging and using~\cref{lemma: H_nuv}, we obtain
    \begin{align*}
        H_{wv} &= \ell + H_{\mu_\ell v} - \frac{2|E(G)|}{d(v)}\sum_{i=1}^{\ell-1} \pr_w[X_i=v] \pm \cO\left(\frac{1}{pn}\right),
    \end{align*}
    as desired.
\end{proof}

\section{Calculation of hitting times}

In the following proposition, we compare~$H_{wv}$ to~$H_{uv}$ for any $u\neq v\neq w\in V(G)$.
\begin{prop}\label{prop: mixed hitting times}
    Let $G\in \cG_{k,n,p}$.
    Let $u\neq v\neq w\in V(G)$. Then,
    \begin{align*}
        H_{wv} -H_{uv} = \frac{2|E(G)|}{d(v)}\sum_{i=1}^{3k+1}(\pr_u[X_i=v]-\pr_w[X_i=v]) \pm \cO\left(\frac{\sqrt{\log n}}{\sqrt{pn}}\right).
    \end{align*}
\end{prop}
\begin{proof}
\Cref{lemma: formula for Hitting times} (applied with $\ell=3k+2$) gives
\begin{align*}
    H_{wv}-H_{uv} &= H_{\mu_{\ell, w}v}-H_{\mu_{\ell, u}v} - \frac{2|E(G)|}{d(v)} \sum_{i=1}^{3k+1} (\pr_w[X_i=v] - \pr_u[X_i=v]) \pm \cO\left(\frac{1}{pn}\right).
\end{align*}
Applying \cref{lemma: H_muv-Hmu'v} yields the desired result.
\end{proof}

We can now prove the following explicit formula for hitting times.
\begin{theorem}\label{thm: main version with probabilities}
    Let $G\in\cG_{k,n,p}$ and $v\neq w\in V(G)$. Then,
    \begin{align*}
        H_{wv} &= \frac{2|E(G)|}{d(v)} - 1 + \frac{2|E(G)|}{d(v)} \frac{1}{|N(v)|} \sum_{u\in N(v)}\sum_{i=1}^{3k+1}(\pr_u[X_i=v]-\pr_w[X_i=v]) \pm\cO\left(\frac{\sqrt{\log n}}{\sqrt{pn}}\right).
    \end{align*}
\end{theorem}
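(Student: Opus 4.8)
The plan is to derive \cref{thm: main version with probabilities} as a direct combination of the two facts already in hand: the exact formula for the average hitting time over $N(v)$ from \cref{lemma: H_nuv}, and the comparison of hitting times across starting vertices from \cref{prop: mixed hitting times}. The starting point is the trivial identity
\begin{equation*}
    H_{wv} = H_{N(v)} + \frac{1}{|N(v)|}\sum_{u\in N(v)}\left(H_{wv}-H_{uv}\right),
\end{equation*}
which holds simply because $\frac{1}{|N(v)|}\sum_{u\in N(v)} H_{uv} = H_{N(v)}$ by definition. So the strategy is: plug in the closed form for $H_{N(v)}$, then replace each difference $H_{wv}-H_{uv}$ by its formula from \cref{prop: mixed hitting times}, and finally average the resulting main terms and error terms over $u\in N(v)$.

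Concretely, first I would substitute $H_{N(v)} = \frac{2|E(G)|}{d(v)} - 1$ using \cref{lemma: H_nuv}. Next, for each $u\in N(v)$, note that $u\neq v$ and, by hypothesis, $w\neq v$, so \cref{prop: mixed hitting times} applies whenever $u\neq w$ and gives $H_{wv}-H_{uv} = \frac{2|E(G)|}{d(v)}\sum_{i=1}^{3k+1}\left(\pr_u[X_i=v]-\pr_w[X_i=v]\right) \pm \cO\!\left(\frac{\sqrt{\log n}}{\sqrt{pn}}\right)$. In the case $w\in N(v)$, the term $u=w$ may simply be kept in the sum without any harm, since both $H_{wv}-H_{wv}$ and $\sum_{i=1}^{3k+1}\left(\pr_w[X_i=v]-\pr_w[X_i=v]\right)$ vanish identically; in particular no case distinction on $\op{dist}(v,w)$ is needed. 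Averaging over the $|N(v)|$ summands then turns $\frac{1}{|N(v)|}\sum_{u\in N(v)}\frac{2|E(G)|}{d(v)}\sum_{i=1}^{3k+1}\left(\pr_u[X_i=v]-\pr_w[X_i=v]\right)$ into exactly the middle term of the claimed formula, while the averaged error contributes another $\cO\!\left(\frac{\sqrt{\log n}}{\sqrt{pn}}\right)$, which is absorbed.

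I do not expect a genuine obstacle here: once \cref{lemma: H_nuv} and \cref{prop: mixed hitting times} are available, the theorem is essentially a one-line calculation, and the only point requiring a moment of care is the bookkeeping for the $u=w$ term discussed above. All the real work has already been done in establishing the mixing estimates (\cref{prop: mixing property} and \cref{lemma: random walk probabilities}), the bound $|H_{wv}-n| \lesssim (pn)^k$ (\cref{cor: H_wv-n}) feeding into \cref{lemma: H_muv-Hmu'v}, and the hitting-time identity of \cref{lemma: formula for Hitting times}; the present proof merely assembles these pieces.
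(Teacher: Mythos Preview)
Your proposal is correct and mirrors the paper's proof exactly: write $H_{wv}=H_{N(v)}+\frac{1}{|N(v)|}\sum_{u\in N(v)}(H_{wv}-H_{uv})$, insert \cref{lemma: H_nuv} for the first term and \cref{prop: mixed hitting times} for each summand, and average the $\cO(\sqrt{\log n}/\sqrt{pn})$ errors. Your extra remark about the $u=w$ summand is harmless but not actually needed, since \cref{prop: mixed hitting times} only assumes $u\neq v$ and $w\neq v$ (and in any case both sides vanish when $u=w$).
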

\begin{proof}
    From \cref{lemma: H_nuv}, we know that the average hitting time in~$N(v)$ satisfies
    \begin{equation}\label{equation: H_A}
        H_{N(v)} = \frac{1}{|N(v)|}\sum_{u\in N(v)} H_{uv} = \frac{2|E(G)|}{d(v)} - 1.
    \end{equation}
    \Cref{prop: mixed hitting times} implies that
    \begin{align*}
        H_{wv} - H_{N(v)} &= \frac{1}{|N(v)|}\sum_{u\in N(v)} (H_{wv}-H_{uv}) \\
        &= \frac{1}{|N(v)|}\sum_{u\in N(v)}\frac{2|E(G)|}{d(v)}\sum_{i=1}^{3k+1}(\pr_u[X_i=v]-\pr_w[X_i=v])\pm\cO\left(\frac{\sqrt{\log n}}{\sqrt{pn}}\right)
    \end{align*}
    and thus \cref{equation: H_A} gives the desired result.
\end{proof}

We want to express the second term in our formula in terms of number of walks starting from~$w$ and~$u$, respectively.
Recall that $\cW_i(w)$ denotes the number of walks of length~$i$ starting at~$w$ and $\cW_i(w,v)$ the number of $w$-$v$ walks of length~$i$.

\begin{prop}\label{prop: probabilities in terms of paths}
    Let $i\in\bN$ and $G\in\cG_{k,n,p}$.
    For all $v,w\in V(G)$, we have
    \begin{align*}
        \pr_w[X_i = v] = \frac{\cW_i(w,v)}{\cW_i(w)} \pm \cO\left(\frac{\sqrt{\log n}}{p^{3/2}n^{3/2}}\right).
    \end{align*}
\end{prop}
\begin{proof}
    The claim is certainly true for $i=1$. Assume it holds true for all $i'<i$. By \cref{cor: properties of G(np)}\cref{item: 1/N(v)}, we have
    \begin{align*}
        \pr_w[X_i = v] &= \sum_{u\in N(v)} \pr[X_i=v \mid X_{i-1} = u]\pr_w[X_{i-1}=u] \\
        &= \sum_{u\in N(v)} \frac{1}{|N(u)|} \pr_w[X_{i-1}=u] \\
        &= \left(\frac{1}{pn}\pm\cO\left(\frac{\sqrt{\log n}}{p^{3/2}n^{3/2}}\right)\right) \sum_{u\in N(v)} \pr_w[X_{i-1}=u].
    \end{align*}
    Applying the induction hypothesis yields $\pr_w[X_{i-1} = u] = \frac{\cW_{i-1}(w,u)}{\cW_{i-1}(w)}\pm\cO(\frac{\sqrt{\log n}}{p^{3/2}n^{3/2}})$ for all $u\in V(G)$. 
    Note that \cref{lemma: properties of G(np)}\cref{item: N(v)} implies that $\cW_i(w)=(pn\pm \cO(\sqrt{pn\log n}))\cW_{i-1}(w)$ and that any walk starting at~$w$ of length $i-1$ ending in~$N(v)$ can be extended to exactly one $w$-$v$ walk of length~$i$ and at least $\frac{pn}{2}$ distinct walks of length~$i$ starting at~$w$. This implies that $\frac{\cW_i(w,v)}{\cW_i(w)}=\cO(\frac{1}{pn})$.
    Thus, we obtain
    \begin{align*}
        \pr_w[X_i = v]
        &= \left(\frac{1}{pn}\pm\cO\left(\frac{\sqrt{\log n}}{p^{3/2}n^{3/2}}\right)\right) \sum_{u\in N(v)} \frac{\cW_{i-1}(w,u)}{\cW_{i-1}(w)}\pm |N(v)|\cdot\cO\left(\frac{\sqrt{\log n}}{p^{5/2}n^{5/2}}\right) \\
        &= \left(\frac{1}{pn}\pm\cO\left(\frac{\sqrt{\log n}}{p^{3/2}n^{3/2}}\right)\right)\frac{\cW_i(w,v)}{\cW_{i-1}(w)} \pm \cO\left(\frac{\sqrt{\log n}}{p^{3/2}n^{3/2}}\right) \\
        &= \left(\frac{1}{pn}\pm\cO\left(\frac{\sqrt{\log n}}{p^{3/2}n^{3/2}}\right)\right)(pn\pm \cO(\sqrt{pn\log n}))\frac{\cW_i(w,v)}{\cW_i(w)} \pm \cO\left(\frac{\sqrt{\log n}}{p^{3/2}n^{3/2}}\right) \\
        &= \left(1\pm\cO\left(\frac{\sqrt{\log n}}{\sqrt{pn}}\right)\right)\frac{\cW_i(w,v)}{\cW_i(w)} \pm \cO\left(\frac{\sqrt{\log n}}{p^{3/2}n^{3/2}}\right) \\
        &= \frac{\cW_i(w,v)}{\cW_i(w)} \pm \cO\left(\frac{\sqrt{\log n}}{p^{3/2}n^{3/2}}\right),
    \end{align*}
    as desired.
\end{proof}

We are now able to prove~\cref{thm: main}.

\begin{proof}[Proof of~\cref{thm: main}]
Let $k\geq 2$ and $n$ be integers, let $\frac{\log n}{n^{(k-1)/k}}\leq p\leq 1-\Omega(\frac{\log^4n}{n})$.
    By \cref{lemma: properties of G(np)}, any $G\sim G(n,p)$ belongs to $\cG_{k,n,p}$ with high probability. Thus, it suffices to consider arbitrary $G\in \cG_{k,n,p}$ and $v\neq w\in V(G)$, and show that the hitting time $H_{wv}$ satisfies (deterministically) the desired formula.
By~\cref{thm: main version with probabilities} we obtain
\begin{align*}
    H_{wv} &= \frac{2|E(G)|}{d(v)} - 1 + \frac{2|E(G)|}{d(v)} \frac{1}{|N(v)|} \sum_{u\in N(v)}\sum_{i=1}^{3k+1}(\pr_u[X_i=v] -\pr_w[X_i=v])\pm\cO\left(\frac{\sqrt{\log n}}{\sqrt{pn}}\right).
\end{align*}
Applying~\cref{prop: probabilities in terms of paths} and~\cref{cor: properties of G(np)}\cref{item: 2E/deg v} yields the desired result.
\end{proof}

To prove~\cref{cor: diameter 2}, we need the following lemma, which estimates the probability that a random walk hits~$v$ after~$\ell$ steps.
\begin{lemma}\label{lemma: random walk mixing time}
    Let $k\geq 2$ and $\ell\geq 3$. Let $p\geq \frac{\log n}{\sqrt{n}}$ and $G\in \cG_{k,n,p}$. Let $X=X_0,X_1,\ldots$ be a random walk on~$G$ starting at $w\in V(G)$. Then,
    \begin{equation*}\label{equation: pr_w[X_2=x]}
        \pr_w[X_2=x] = \left\{\begin{matrix*}[l]
        \frac{1}{n} \pm \cO\left(\frac{\sqrt{\log n}}{pn^{3/2}}\right) & \text{if }x\neq w \\
        \frac{1}{pn} \pm \cO\left(\frac{\sqrt{\log n}}{p^{3/2}n^{3/2}}\right) & \text{if }x=w,
        \end{matrix*}\right.
    \end{equation*}
    and
    \begin{equation*}\label{equation: pr_w[X_k=x]}
        \pr_w[X_\ell=x] = \frac{1}{n} \pm \cO\left(\frac{\sqrt{\log n}}{pn^{3/2}}\right).
    \end{equation*}
\end{lemma}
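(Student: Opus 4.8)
The plan is to compute $\pr_w[X_2=x]$ directly by conditioning on the first step, and then to obtain the estimate for $\pr_w[X_\ell=x]$, $\ell\geq 3$, by induction on $\ell$, with the base case $\ell=3$ fed by the two‑step computation. Throughout, the inputs are \cref{lemma: properties of G(np)}\cref{item: N(v),item: N(v) cap N(w)} (degree and common‑neighbourhood concentration), \cref{cor: properties of G(np)}\cref{item: 1/N(v)} (the estimate for $1/d(u)$), and \cref{lemma: landau} to combine the resulting ratios.

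For the two‑step bound, conditioning on $X_1$ gives $\pr_w[X_2=x]=\frac{1}{d(w)}\sum_{u\in N(w)\cap N(x)}\frac{1}{d(u)}$ for $x\neq w$ and $\pr_w[X_2=w]=\frac{1}{d(w)}\sum_{u\in N(w)}\frac{1}{d(u)}$. I would substitute $\frac{1}{d(u)}=\frac{1}{pn}\pm\cO(\frac{\sqrt{\log n}}{p^{3/2}n^{3/2}})$ everywhere, use $|N(w)\cap N(x)|=p^2n\pm\cO(\sqrt{p^2n\log n})$ (here the maximum in \cref{lemma: properties of G(np)}\cref{item: N(v) cap N(w)} equals $\sqrt{p^2n\log n}$ since $p\geq\log n/\sqrt n$) and $|N(w)|=d(w)=pn\pm\cO(\sqrt{pn\log n})$, and multiply out to get $\sum_{u\in N(w)\cap N(x)}\frac{1}{d(u)}=p\pm\cO(\sqrt{\log n/n})$ and $\sum_{u\in N(w)}\frac{1}{d(u)}=1\pm\cO(\sqrt{\log n/(pn)})$. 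Dividing by $d(w)=pn\pm\cO(\sqrt{pn\log n})$ and feeding this to \cref{lemma: landau} (a one‑line check shows we land in the case with error $\cO(f'/g)$) produces $\frac{1}{n}\pm\cO(\frac{\sqrt{\log n}}{pn^{3/2}})$ and $\frac{1}{pn}\pm\cO(\frac{\sqrt{\log n}}{p^{3/2}n^{3/2}})$, respectively.

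For $\ell\geq 3$ I would condition on $X_{\ell-1}$ to write $\pr_w[X_\ell=x]=\sum_{u\in N(x)}\frac{1}{d(u)}\pr_w[X_{\ell-1}=u]$ and induct. For the base case $\ell=3$, the inner probabilities $\pr_w[X_2=u]$ equal $\frac{1}{n}\pm\cO(\frac{\sqrt{\log n}}{pn^{3/2}})$ for $u\neq w$ by the previous paragraph; if $w\in N(x)$ the single term $u=w$ must be peeled off, but it contributes only $\frac{1}{d(w)}\pr_w[X_2=w]=\cO(\frac{1}{p^2n^2})=\cO(\frac{\sqrt{\log n}}{pn^{3/2}})$ (using $pn^{1/2}\geq\log n$), so it is harmless. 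Combining with $\frac{1}{d(u)}=\frac{1}{pn}\pm\cO(\cdot)$ and $|N(x)|=pn\pm\cO(\sqrt{pn\log n})$ gives $\pr_w[X_3=x]=\frac{1}{n}\pm\cO(\frac{\sqrt{\log n}}{pn^{3/2}})$. For the inductive step $\ell\geq 4$, the crucial estimate is $\sum_{u\in N(x)}\pr_w[X_{\ell-1}=u]=p\pm\cO(\sqrt{\log n/n})$, obtained from the inductive hypothesis together with $|N(x)|=pn\pm\cO(\sqrt{pn\log n})$; writing $\frac{1}{d(u)}=\frac{1}{pn}+\delta_u$ with $\delta_u=\cO(\frac{\sqrt{\log n}}{p^{3/2}n^{3/2}})$, the main term $\frac{1}{pn}\sum_{u\in N(x)}\pr_w[X_{\ell-1}=u]$ is $\frac{1}{n}\pm\cO(\frac{\sqrt{\log n}}{pn^{3/2}})$ and the remainder $\sum_{u\in N(x)}\delta_u\pr_w[X_{\ell-1}=u]$ has absolute value $\cO(\frac{\sqrt{\log n}}{p^{3/2}n^{3/2}})\cdot\cO(p)=\cO(\frac{\sqrt{\log n}}{\sqrt pn^{3/2}})$, which is absorbed since $p\leq 1$. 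Each step multiplies the implied constant by $1+o(1)$ (plus an additive $\cO(1)$), so it stays bounded for $\ell\leq 3k+1$; for $\ell\geq 3k+2$ I would instead invoke \cref{lemma: random walk probabilities}, whose error $\frac{\sqrt{\log n}}{\sqrt pn^{3/2}}$ is at most $\frac{\sqrt{\log n}}{pn^{3/2}}$, so the bound holds for all $\ell\geq 3$ with a constant depending only on $k$.

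The main obstacle is the error bookkeeping in the inductive step: the naive bound $\sum_{u\in N(x)}\pr_w[X_{\ell-1}=u]\leq 1$ would leave an error of order $\frac{\sqrt{\log n}}{p^{3/2}n^{3/2}}$, too weak by a factor $p^{-1/2}$, so one must exploit that this sum is actually of order $p$, i.e.\ that $|N(x)|\approx pn$ and $\mu_{\ell-1}$ is already nearly uniform. The only other point requiring care is the anomalous term $u=w$ in the base case $\ell=3$, which is controlled precisely because $p\geq\log n/\sqrt n$.
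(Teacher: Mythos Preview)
Your proposal is correct and follows essentially the same route as the paper: compute $\pr_w[X_2=x]$ by conditioning on $X_1$ using the degree and common-neighbourhood estimates, treat $\ell=3$ separately by peeling off the anomalous term $u=w$, and then induct for $\ell\geq 4$ by conditioning on $X_{\ell-1}$. Your additional step of invoking \cref{lemma: random walk probabilities} for $\ell\geq 3k+2$ to obtain a constant depending only on $k$ is not needed (since $\ell$ is a fixed parameter and the implicit constant may depend on it), but it is harmless.
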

\begin{proof}
Using \cref{cor: properties of G(np)}\cref{item: 1/N(v)} we obtain
\begin{align*}
    \pr_w[X_2=w] &= \sum_{u\in N(w)} \pr[X_2=w\mid X_1=u]\pr_w[X_1=u]
    = \frac{1}{|N(w)|}\sum_{u\in N(w)} \frac{1}{|N(u)|}\\
    &= \frac{1}{pn} \pm \cO\left(\frac{\sqrt{\log n}}{p^{3/2}n^{3/2}}\right),
\end{align*}
    while for $x\neq w$, \cref{lemma: properties of G(np)}\ref{item: N(v) cap N(w)}, \cref{cor: properties of G(np)}\ref{item: 1/N(v)}, and the fact that $p\geq \frac{\log n}{\sqrt{n}}$ imply
        \COMMENT{
            We have
            \begin{align*}
                \left(\frac{1}{pn}\pm\cO\left(\frac{\sqrt{\log n}}{p^{3/2}n^{3/2}}\right)\right)^2(p^2n\pm\cO(\sqrt{p^2n\log n}))
                &= \left(\frac{1}{p^2n^2}\pm\cO\left(\frac{\sqrt{\log n}}{p^{5/2}n^{5/2}}\right)\right)(p^2n\pm\cO(\sqrt{p^2n\log n})) \\
                &= \frac{1}{n} \pm \cO\left(\frac{\sqrt{\log n}}{\sqrt{p}n^{3/2}}\right) \pm \cO\left(\frac{\sqrt{\log n}}{pn^{3/2}}\right).
            \end{align*}
        }
    \begin{align*}
        \pr_w[X_2=x] &= \pr_w[X_2=x\mid X_1\in N(x)]\pr_w[X_1\in N(x)] \\
        &= \frac{1}{|N(X_1)|}\frac{|N(x)\cap N(w)|}{|N(w)|}\\
        &=\left(\frac{1}{pn}\pm \cO\left(\frac{\sqrt{\log n}}{p^{3/2}n^{3/2}}\right)\right)^2(p^2n\pm \cO(\sqrt{p^2n \log n}))\\
        &= \frac{1}{n} \pm \cO\left(\frac{\sqrt{\log n}}{pn^{3/2}}\right).
    \end{align*}
    For $\pr_w[X_3=x]$, we use \cref{lemma: properties of G(np)}\cref{item: N(v)}, \cref{cor: properties of G(np)}\cref{item: 1/N(v)}, and the above to obtain that
    \begin{align*}
        \pr_w[X_3=x] &= \sum_{u\in N(x)} \pr_w[X_3=x \mid X_2=u]\pr_w[X_2=u] = \sum_{u\in N(x)} \frac{1}{|N(u)|}\pr_w[X_2=u] \\
        &= \left(\frac{1}{pn}\pm \cO\left(\frac{\sqrt{\log n}}{p^{3/2}n^{3/2}}\right)\right) \left(\sum_{u\in N(x)\setminus\{w\}}\pr_w[X_2=u] \pm \pr_w[X_2=w]\right)\\
        &= \left(\frac{1}{pn}\pm \cO\left(\frac{\sqrt{\log n}}{p^{3/2}n^{3/2}}\right)\right)\left(\sum_{u\in N(x)\setminus\{w\}}\left(\frac{1}{n}\pm \cO\left(\frac{\sqrt{\log n}}{pn^{3/2}}\right)\right)\pm\cO\left(\frac{1}{pn}\right)\right)\\
        &= \left(\frac{1}{pn}\pm \cO\left(\frac{\sqrt{\log n}}{p^{3/2}n^{3/2}}\right)\right)(pn\pm \cO(\sqrt{pn\log n}))\left(\frac{1}{n}\pm \cO\left(\frac{\sqrt{\log n}}{pn^{3/2}}\right)\right) \pm \cO\left(\frac{1}{p^2n^2}\right) \\
        &= \frac{1}{n}\pm\cO\left(\frac{\sqrt{\log n}}{pn^{3/2}}\right).
    \end{align*}
    Now suppose that $\ell\geq 4$ and $\pr_w[X_{\ell-1}=u] = \frac{1}{n}\pm\cO(\frac{\sqrt{\log n}}{pn^{3/2}})$ for all $u\in V(G)$. Then, \cref{lemma: properties of G(np)}\cref{item: N(v)} and \cref{cor: properties of G(np)}\cref{item: 1/N(v)} give
    \begin{align*}
        \pr_w[X_\ell=x] &= \sum_{u\in N(x)} \pr_w[X_\ell=x\mid X_{\ell-1}=u]\pr_w[X_{\ell-1}=u] \\
        &= (pn\pm\cO(\sqrt{pn\log n}))\left(\frac{1}{pn}\pm \cO\left(\frac{\sqrt{\log n}}{p^{3/2}n^{3/2}}\right)\right)\left(\frac{1}{n}\pm\cO\left(\frac{\sqrt{\log n}}{pn^{3/2}}\right)\right) \\
        &= \frac{1}{n} \pm\cO\left(\frac{\sqrt{\log n}}{pn^{3/2}}\right).
    \end{align*}
    The statement follows by induction.
\end{proof}

\begin{proof}[Proof of~\cref{cor: diameter 2}]
We derive the \lcnamecref{cor: diameter 2} from \cref{thm: main version with probabilities} as follows. 
If $\op{dist}(w,v)=1$, then by using \cref{cor: properties of G(np)}\cref{item: 1/N(v),item: 2E/deg v}, and~\cref{lemma: random walk mixing time} we obtain
    \begin{align*}
        \frac{2|E(G)|}{d(v)} &\frac{1}{|N(v)|}\sum_{u\in N(v)}\sum_{i=1}^{3k+1} (\pr_u[X_i=v]-\pr_w[X_i=v]) \\
        &= \frac{2|E(G)|}{d(v)} \frac{1}{|N(v)|} \sum_{u\in N(v)} \left(\pr_u[X_1=v]-\pr_w[X_1=v] \pm \cO\left(\frac{\sqrt{\log n}}{pn^{3/2}}\right)\right) \\
        &= \left(n\pm\cO\left(\frac{\sqrt{n\log n}}{\sqrt{p}}\right)\right) \cO\left(\frac{\sqrt{\log n}}{p^{3/2}n^{3/2}}\right)
        = \cO\left(\frac{\sqrt{\log n}}{p^{3/2}\sqrt{n}}\right),
    \end{align*}
    as desired.
    Similarly, for $\op{dist}(w,v)=2$, we have
    \begin{align*}
        \frac{2|E(G)|}{d(v)} &\frac{1}{|N(v)|}\sum_{u\in N(v)}\sum_{i=1}^{3k+1} (\pr_u[X_i=v]-\pr_w[X_i=v]) \\
        &= \frac{2|E(G)|}{d(v)} \frac{1}{|N(v)|}\sum_{u\in N(v)} \left(\pr_u[X_1=v] \pm \cO\left(\frac{\sqrt{\log n}}{pn^{3/2}}\right)\right) \\
        &= \left(n\pm\cO\left(\frac{\sqrt{n\log n}}{\sqrt{p}}\right)\right) \left(\frac{1}{pn} \pm \cO\left(\frac{\sqrt{\log n}}{p^{3/2}n^{3/2}}\right)\right)\\
        &= \frac{1}{p} \pm \cO\left(\frac{\sqrt{\log n}}{p^{3/2}\sqrt{n}}\right),
    \end{align*}
    as desired.
\end{proof}

\bibliographystyle{abbrv}
\bibliography{bibliography}

\APPENDIX{\appendix

\section{Second eigenvalue}\label{app:lambda2}

In this appendix, we show how to adapt the arguments of \cite{furedi1981eigenvalues} to show that \cref{lemma: properties of G(np)}\cref{item:lambda2} holds with high probability in our range of $p$. In fact, we prove the following.

\begin{lm}\label{lm:lambda2}
    Let $p(1-p)= \Omega(\frac{\log^4 n}{n})$ and $G\sim G(n,p)$. Then, with high probability we have $\max\{|\lambda_2|, |\lambda_n|\}=\cO(\sqrt{pn})$.
\end{lm}

We will need the following more general result about random matrices. This was originally proved in \cite{furedi1981eigenvalues} for constant variance $\sigma^2$ and used to prove the constant $p$ version of \cref{lm:lambda2}. Vu \cite{vu2007spectral} later observed that the methods hold for a larger range of $\sigma^2$.

\begin{lm}[{Vu \cite{vu2007spectral}}]\label{lm:Fueredi}
    Let $\sigma^2= \Omega(\frac{\log^4 n}{n})$.
    Let $\{m_{i,j}\in [-1,1]\mid 1\leq i\leq j\leq n\}$ be independent random variables.
    For each $1\leq i\leq j\leq n$, let $m_{j,i}\coloneqq m_{i,j}$. Define a random symmetric matrix $M\coloneqq (m_{i,j})_{i,j\in [n]}$ and denote by $\mu_1\geq \dots \geq \mu_n$ its eigenvalues. Suppose that $\bE m_{i,j} =0$ and $\Var(m_{i,j})\leq \sigma^2$ for all $1\leq i< j\leq n$. Then, with high probability,
    $\max_{i\in [n]}|\mu_i|=\cO(\sigma\sqrt{n})$.
\end{lm}

We remark that in the original statement of \cref{lm:Fueredi}, that is \cite[Theorem 1.4]{vu2007spectral}, one also requires $\bE m_{i,i}=0$ and $\Var(m_{i,i})\leq \sigma^2$ for all $i\in [n]$. However, this is not required in our case since we further assume $m_{i,j}\in [-1,1]$ for all $1\leq i\leq j\leq n$. Indeed, as mentioned in \cite{vu2007spectral}, changing all the diagonal entries of $M$ to $0$ can only change $\max_{i\in [n]}|\mu_i|$ by at most $1=o(\sigma\sqrt{n})$%
    \COMMENT{For symmetric matrices $M=(m_{i,j})_{i,j\in [n]}$, we have \[\max_{i\in [n]}|\mu_i|=\|M\|_2=\sup_{v\in \bR^n, \|v\|_2=1}|vMv^T|=\sup_{(v_1,\dots, v_n)\in \bR^n, \sum_{i\in [n]}v_i^2=1}\left|\sum_{j,k\in [n]}v_jv_k m_{jk}\right|.\]
    So changing the value of each $m_{i,i}\in [-1,1]$ to $0$ changes $\left|\sum_{j,k\in [n]}v_jv_k m_{jk}\right|$ by at most $\sum_{i\in [n]}v_i^2=1$.}%
    \COMMENT{Original statement thus gives, in our case, a bound of 
    \[\max_{i\in [n]}|\mu_i|=\sigma\sqrt{n}+\cO(\sqrt{\sigma}n^{1/4}\log n)+1.\]
    For $\sigma^2= \Omega(\frac{\log^4 n}{n})$, we have $\sqrt{\sigma}n^{1/4}\log n=\cO(\sigma \sqrt{n})$ and $1=o(\sigma \sqrt{n})$.}.

\begin{proof}[Proof of \cref{lm:lambda2}]
    Let $J$ be the matrix with all $1$ entries and define $M\coloneqq A-pJ$.
    Note that $M=(m_{i,j})_{i,j\in [n]}$ is a symmetric random matrix with $m_{i,j}=m_{j,i}\in [-p,1-p]\subseteq [-1,1]$ for all $i,j\in [n]$ such that $\{m_{i,j}\mid 1\leq i\leq j\leq n\}$ are independent. Moreover, $\mathbb{E} m_{i,j}=0$ and $\Var(m_{i,j})=p(1-p)=\Omega(\frac{\log^4n}{n})$ for all $i\neq j\in[n]$.
    Let $\mu_1\geq \dots \geq \mu_n$ be the eigenvalues of $M$. By \cite[Lemma 1 and 2]{furedi1981eigenvalues}, we have $\lambda_2\leq \mu_1$ and $\lambda_n\geq \mu_n$. Observe also that $0=\tr(A)=\sum_{i\in [n]}\lambda_i$, so $\lambda_n,\mu_n<0$. Thus, the result follows by applying \cref{lm:Fueredi} to $M$%
        \COMMENT{\Cref{lm:Fueredi} gives
        \begin{align*}
            \max\{|\lambda_2|,|\lambda_n|\}= \cO(\sqrt{p(1-p)n})=\cO(\sqrt{pn}).
        \end{align*}}.
\end{proof}

\section{Proof of Proposition \ref{prop: mixing property}}\label{app:mixing}

In this appendix, we prove \cref{prop: mixing property}. Note that the arguments are the same as in~\cite{ottolini2023concentration}. The only difference is that, for our range of $p$, we need to use more precise values for the spectrum of the adjacency matrix (see \cref{lemma: properties of G(np)}\cref{item:phi,item:lambda1,item:lambda2}). We include the details here nonetheless for completeness.

We will need the \emph{vector $\ell_2$-norm} and \emph{matrix spectral norm}. Given a vector $x=(x_1,\dots, x_k)$, let $\|x\|_2\coloneqq \sqrt{\sum_{i\in [k]}|x_i|^2}$ and given a symmetric real matrix $M$, let $\|M\|_2$ be the largest eigenvalue of $M$. (Also recall that the entries of a diagonal matrix are its eigenvalues.)
Given two vectors $x=(x_1,\dots, x_k)$ and $y=(y_1,\dots, y_k)$, we write $\langle x, y\rangle\coloneqq \sum_{i\in [k]}x_iy_i$.
Given a graph $G$, we denote by $D$ the \emph{degree matrix} of $G$, that is, the diagonal matrix with an entry $d(v)$ for each $v\in V(G)$, and note that $D^{-1}A$ is the transition matrix of any random walk on $G$.

\begin{lemma}[{\cite[Lemma 3]{ottolini2023concentration}}]\label{lemma: mixing property}
    Let $G\in\cG_{k,n,p}$. Let~$v$ be a vector whose entries average to~$0$. Then,
    \begin{equation*}
        \|vD^{-1}A\|_2 \lesssim \frac{\sqrt{\log n}}{\sqrt{pn}} \|v\|_2.
    \end{equation*}
\end{lemma}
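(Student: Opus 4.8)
The plan is to use that $D$ is close to $pn\cdot I$ and that $A$ has a large spectral gap (its top eigenvector $\phi$ is nearly proportional to the all-ones vector and $\lambda_1=(1+o(1))pn$, while all other eigenvalues are $\cO(\sqrt{pn})$ in absolute value). Set $E\coloneqq D^{-1}-\frac{1}{pn}I$, a diagonal matrix whose entries satisfy $|\frac{1}{d(u)}-\frac{1}{pn}|=\cO(\frac{\sqrt{\log n}}{p^{3/2}n^{3/2}})$ by \cref{cor: properties of G(np)}\cref{item: 1/N(v)}. Then $vD^{-1}A=\frac{1}{pn}vA+vEA$. For any symmetric matrix $M$ with eigenvalues $\mu_1,\dots,\mu_n$ one has $\|xM\|_2\le(\max_i|\mu_i|)\,\|x\|_2$; since $\max_i|\lambda_i|=\lambda_1=(1+o(1))pn$ by \cref{lemma: properties of G(np)}\cref{item:lambda1,item:lambda2} (here $pn\to\infty$, so $\lambda_1$ dominates $|\lambda_i|=\cO(\sqrt{pn})$ for $i\ge 2$), the second term obeys $\|vEA\|_2\le\|v\|_2\cdot\cO(\frac{\sqrt{\log n}}{p^{3/2}n^{3/2}})\cdot(1+o(1))pn\lesssim\frac{\sqrt{\log n}}{\sqrt{pn}}\|v\|_2$, which is already of the desired size. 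So it remains to show $\|vA\|_2\lesssim\sqrt{pn\log n}\,\|v\|_2$, since then $\frac{1}{pn}\|vA\|_2\lesssim\frac{\sqrt{\log n}}{\sqrt{pn}}\|v\|_2$.

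For this, expand $v=\sum_i c_i\phi^{(i)}$ in the orthonormal eigenbasis of $A$ with $\phi^{(1)}=\phi$, so that $\|vA\|_2^2=\sum_i c_i^2\lambda_i^2\le c_1^2\lambda_1^2+(\max_{i\ge 2}\lambda_i^2)\,\|v\|_2^2$; the last term is $\cO(pn)\|v\|_2^2$ by \cref{lemma: properties of G(np)}\cref{item:lambda2}. To bound $c_1=\langle v,\phi\rangle$, note that the hypothesis on $v$ says precisely $\langle v,\mathbf{1}\rangle=0$, and writing $\phi=\frac{1}{\sqrt n}\mathbf{1}+\delta$ with $\max_j|\delta_j|=\cO(\frac{\log^{3/2}n}{\sqrt{p}n\log(pn)})$ by \cref{lemma: properties of G(np)}\cref{item:phi} gives $c_1=\langle v,\delta\rangle$, hence $|c_1|\le\|v\|_2\|\delta\|_2\le\sqrt n\,(\max_j|\delta_j|)\|v\|_2=\cO(\frac{\log^{3/2}n}{\sqrt{pn}\log(pn)})\|v\|_2$. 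Since $pn\ge n^{1/k}\log n$ we have $\log(pn)\ge\frac1k\log n\gtrsim\log n$, so $|c_1|\lesssim\frac{\sqrt{\log n}}{\sqrt{pn}}\|v\|_2$ and $c_1^2\lambda_1^2\lesssim\frac{\log n}{pn}(pn)^2\|v\|_2^2=pn\log n\,\|v\|_2^2$. Combining the two contributions yields $\|vA\|_2^2\lesssim pn\log n\,\|v\|_2^2$, which is exactly what was needed.

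The only genuinely delicate point — and the reason one needs the entrywise control on $\phi$ in \cref{lemma: properties of G(np)}\cref{item:phi}, not merely the eigenvalue bounds — is that for vanishing $p$ the Perron eigenvector $\phi$ is not exactly $\frac{1}{\sqrt n}\mathbf{1}$, so the assumption that the entries of $v$ average to $0$ does not make $v$ orthogonal to $\phi$. One must instead control the leftover component $c_1=\langle v,\delta\rangle$ and verify that the resulting estimate $\frac{\log^{3/2}n}{\sqrt{pn}\log(pn)}$ is indeed $\cO(\frac{\sqrt{\log n}}{\sqrt{pn}})$ throughout our range of $p$, which is where $\log(pn)\gtrsim\log n$ enters. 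For constant $p$ this step is essentially immediate, so the remainder of the argument is exactly as in \cite{ottolini2023concentration}; everything else here is routine manipulation of operator norms.
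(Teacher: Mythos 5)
Your argument is correct and follows the same route as the paper's proof: decompose $D^{-1}=\frac{1}{pn}I+E$ (the paper calls these $D_1$ and $D_2$), control the $E$-term via $\|v\|_2\|E\|_2\|A\|_2$, and bound $\|vA\|_2$ by expanding in the eigenbasis of $A$ and using the entrywise control on $\phi$ together with Cauchy--Schwarz and $\log(pn)\gtrsim\log n$ to control the projection onto $\phi$. Your observation about why the entrywise eigenvector bound is needed (rather than mere eigenvalue estimates) is exactly the point the paper is making in adapting \cite{ottolini2023concentration} to vanishing $p$.
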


\begin{proof}
    We first make the following observation.
    \begin{claim}\label{claim}
        We have 
        \[\frac{\| vA\|_2}{pn}\lesssim \frac{\sqrt{\log n}}{\sqrt{pn}}\| v\|_2.\]
    \end{claim}
    \begin{proofclaim}
    Write $v=(v_1,\dots, v_n)$ and recall that $\phi=(\phi_1, \dots, \phi_n)$ denotes the eigenvector associated to the largest eigenvalue $\lambda_1$ of $A$. By assumption, $\sum_{i\in [n]}v_i=0$, so together with the Cauchy-Schwarz inequality and \cref{lemma: properties of G(np)}\cref{item:phi}, we obtain
    \begin{align*}
        \langle\phi,v\rangle^2&=\left(\sum_{i\in [n]}\phi_i v_i\right)^2
        =\left(\sum_{i\in [n]} \frac{v_i}{\sqrt{n}}+\sum_{i\in [n]}\left(\phi_i-\frac{1}{\sqrt{n}}\right)v_i\right)^2\\
        &\leq \sum_{i\in [n]}\left(\phi_i-\frac{1}{\sqrt{n}}\right)^2\cdot \sum_{i\in [n]}v_i^2
        \lesssim \frac{\log^3 n}{pn \log^2(pn)}\|v\|_2^2.
    \end{align*}
    Recall that the eigenvectors of $A$ form an orthonormal basis.%
        \COMMENT{Let $\phi, \psi_2, \dots, \psi_n$ be the eigenvectors associated to $\lambda_1, \lambda_2, \dots, \lambda_n$. These form a basis of $\mathbb{R}^n$, so we can write $v=\langle\phi,v\rangle\phi+\sum_{i=2}^n\langle\psi_i,v\rangle \psi_i$.
        Thus,
        \begin{align*}
            vA=\langle\phi,v\rangle \phi A+\sum_{i=2}^n\langle\psi_i,v\rangle \psi_iA =\langle\phi,v\rangle\lambda_1 \phi+\sum_{i=2}^n\langle\psi_i,v\rangle \lambda_i \psi_i,
        \end{align*}
        and so
        \begin{align*}
            \| vA\|_2^2&=\langle vA, vA\rangle
            = (vA)(vA)^T= \langle\phi,v\rangle^2\lambda_1^2 \langle\phi,\phi\rangle+\sum_{i=2}^n\lambda_i^2\langle\psi_i,v\rangle^2 \langle\psi_i,\psi_i\rangle\\
            &\leq \langle\phi,v\rangle^2\lambda_1^2 \langle\phi,\phi\rangle+\lambda_2^2\sum_{i=2}^n\langle\psi_i,v\rangle^2 \langle\psi_i,\psi_i\rangle\\
            &\leq \langle\phi,v\rangle^2\lambda_1^2+\lambda_2^2\sum_{i=1}^n\langle\psi_i,v\rangle^2
            = \langle\phi,v\rangle^2\lambda_1^2+\lambda_2^2\|v\|_2^2.
        \end{align*}}
    Hence, \cref{lemma: properties of G(np)}\cref{item:lambda1,item:lambda2} imply that
    \begin{align*}
        \frac{\| vA\|_2}{pn}&\leq \frac{1}{pn}\sqrt{\lambda_1^2\langle\phi, v\rangle^2+\lambda_2^2\|v\|_2^2}
        \lesssim\frac{1}{pn}\sqrt{(pn)^2\cdot \frac{\log^3 n}{pn \log^2(pn)}\|v\|_2^2+pn\|v\|_2^2}\\
        &\lesssim \frac{\log^{3/2}n}{\sqrt{pn}\log(pn)}\|v\|_2
        \leq \frac{\log^{3/2}n}{\sqrt{pn}\log(n^{1/k}\log n)}\|v\|_2
        \lesssim\frac{\sqrt{\log n}}{\sqrt{pn}} \|v\|_2,
    \end{align*}
    as desired.
    \end{proofclaim}

    Let $D_1$ be the diagonal $n\times n$ matrix with entries $\frac{1}{pn}$. Let $D_2\coloneqq D^{-1}-D_1$ and observe that $D_2$ is the diagonal matrix with entries $\frac{1}{d(v)}-\frac{1}{pn}$.
    Note that, by \cref{cor: properties of G(np)}\cref{item: 1/N(v)}, we have
    \begin{align*}
        \|D_1\|_2 =\frac{1}{pn} \qquad \text{and} \qquad \|D_2\|_2=\cO\left(\frac{\sqrt{\log n}}{p^{3/2}n^{3/2}}\right).
    \end{align*}    
    Together with \cref{lemma: properties of G(np)}\cref{item:lambda1} and \cref{claim}, this implies that
    \begin{align*}
        \| vD^{-1}A\|_2&=\| v(D_1+D_2)A\|_2 \leq \| vD_1A\|_2+\| vD_2A\|_2\\
        &\leq \| D_1\|_2\| vA\|_2 +\| v\|_2 \|D_2\|_2\|A\|_2
        \lesssim \frac{\| vA\|_2}{pn}+\frac{\sqrt{\log n}}{p^{3/2}n^{3/2}}\| v\|_2\|A\|_2\\
        &\lesssim \frac{\sqrt{\log n}}{\sqrt{pn}}\| v\|_2,
    \end{align*}
    as desired.

\end{proof}

\begin{proof}[Proof of \cref{prop: mixing property}]
    First, observe that the Cauchy-Schwarz inequality implies that $\|\mu_\ell-\pi\|^2\leq n\|\mu_\ell-\pi\|_2^2$, so it suffices to show that
    \[\|\mu_\ell-\pi\|_2\lesssim \frac{(\log n)^{(\ell-1)/2}}{(pn)^{\ell/2}}.\]
    We proceed by induction on $\ell$.
    By \cref{lemma: properties of G(np)}\cref{item: N(v)}, \cref{cor: properties of G(np)}\cref{item: 1/N(v),item: stationary distribtution}, we have
    \[\|\mu_1-\pi\|_2\leq \|\mu_1\|_2+\|\pi\|_2\lesssim \sqrt{\frac{pn}{(pn)^2}}+\sqrt{\frac{n}{n^2}}\lesssim \frac{1}{\sqrt{pn}}.\]
    For the induction step, suppose that $\|\mu_\ell-\pi\|_2\lesssim \frac{(\log n)^{(\ell-1)/2}}{(pn)^{\ell/2}}$.
    Observe that $\mu_\ell$ and $\pi$ are both probability distributions, so $\mu_\ell-\pi$ has mean value $0$. Thus, \cref{lemma: mixing property} implies that
    \begin{align*}
        \|\mu_{\ell+1}-\pi\|_2=\|\mu_\ell D^{-1}A-\pi\|_2=\|(\mu_\ell-\pi)D^{-1}A\|_2\lesssim\frac{(\log n)^{\ell/2}}{(pn)^{(\ell+1)/2}},
    \end{align*}
    as desired.
\end{proof}

}

\end{document}